\newtheorem{thm}{Theorem}
\newtheorem*{mkthm}{Monge-Kantorovich Duality}
\newtheorem{lem}{Lemma}
\newtheorem{prop}{Proposition}
\theoremstyle{definition}
\newtheorem{dfn}{Definition}
\newtheorem{exx}{Example}
\theoremstyle{remark}
\newcommand{\bR}{\mathbb{R}}
\newcommand{\bZ}{\mathbb{Z}}
\newcommand{\del}{\partial}
\newcommand{\ysub}{\del^c \psi(y)}
\newcommand{\sub}{\del^c \psi^c(x')}
\newcommand{\hh}{\hookleftarrow}
\newcommand{\sZ}{\mathscr{Z}}
\newcommand{\cd}{c_\Delta}
\newcommand{\sH}{\mathscr{H}}
\begin{document}

\title{Topology of Singularities of Optimal Semicouplings}

\author{J. H. Martel}
\date{\today}
\email{jhmartel@protonmail.com}
\maketitle

\begin{abstract}
This article is a summary of some results from the author's thesis \cite{martel}. We study the topology of singularities of $c$-optimal semicouplings in unequal dimension. Our main results \ref{A}, \ref{B} describe homotopy-reductions from a source space $(X,\sigma)$ onto the singularities $Z_j$, $j\geq 0$ of $c$-optimal semicouplings whenever $(Y, \tau)$ is a Riemannian target space and $c: X\times Y\to \bR \cup \{+\infty\}$ is a cost satisfying some general assumptions (A0)--(A5). We construct continuous strong deformation retracts $X\leadsto Z_j$ whenever a condition called Uniform Halfspace (UHS) condition is satisfied along appropriate subsets.

\end{abstract}
\tableofcontents

\section{Introduction}


Throughout this article $X, Y$ designate finite-dimensional Riemannian manifolds-with-corners. The spaces $X, Y$ are typically of unequal dimension $\dim(X) >> \dim(Y)$. We let $\sigma, \tau$ denote Borel-Radon measures on $X,Y$, called the \emph{source} and \emph{target} measures, respectively. We assume that $\sigma, \tau$ are absolutely continuous with respect to the Riemannian volume measures on $X,Y$, or absolutely continuous with respect to the Hausdorff measures $\mathscr{H}_X$, $\mathscr{H}_Y$. 
%

Optimal transportation is a subject with a remarkably wide range of applications \cite{Vil1}, \cite{Vil2}. In \cite{martel} some new applications of optimal transport to algebraic topology are proposed. The idea is that the singularities $Z$ of $c$-optimal semicouplings define contravariant topological functors $Z: 2^Y \to 2^X$, where $Z(Y_I):=\cap_{y\in Y_I} \ysub$ for $\psi^{cc}=\psi$ a $c$-concave potential on $Y$. The functor $Z=Z(c,\sigma, \tau)=Z(\psi)$ depends on the optimal transport datum $c, \sigma, \tau$, and specifically solutions $\psi: Y \to \bR \cup \{-\infty\}$ to Kantorovich's dual max program. We study the topology of the functor $Z$, which exists in general conditions whenever the source $\sigma$ is \emph{abundant} with respect to the target $\tau$:  \begin{equation}\label{abundant}\int_X \sigma > \int_Y \tau. \end{equation} 

The results of this article demonstrate that homotopy-isomorphisms and strong deformation retracts can be constructed between various inclusions $Z(Y_I) \hookleftarrow Z(Y_J)$ arising from contravariance and inclusions $Y_I\hookrightarrow Y_J$. Following an idea of Dror Bar-Natan \cite{Bar2002} the functor $Z$ leads to a descending filtration of $X$ into subvarieties $Z_j$, $j=0,1,2, \ldots$, namely \begin{equation}\label{filt}
(X=Z_0) \hookleftarrow (A=Z_1) \hookleftarrow Z_2 \hookleftarrow Z_3 \hookleftarrow \cdots .
\end{equation}
Our first theorem \ref{A} identifies local conditions for which the first inclusion $(X=Z_0) \hookleftarrow (A=Z_1)$ is a homotopy-isomorphism, even a strong deformation retract. Our second theorem \ref{B} identifies local conditions and maximal index $J\geq 0$ for which the right-hand inclusions $Z_j \hookleftarrow Z_{j+1}$ are homotopy-isomorphisms, and again even strong deformation retracts. The results of \ref{A}, \ref{B}, \ref{retract1}, \ref{retract2} identifies a condition, which we call Uniform Halfspace (UHS) conditions \ref{whs}, and indices $J\geq 0$ for which the source space $X$ can be continously reduced via strong deformation retracts to codimension-$J$ subvarieties $X \leadsto Z_{J+1} $. This leads to the possibility of constructing spines and souls as the singularities of $c$-optimal semicouplings, a topological application which we cannot elaborate here.

\section{Cost Assumptions}\label{costassump}

 The topology of the locus-of-discontinuities and the singularity $Z$ of $c$-optimal semicouplings is determined by the geometry of the cost $c$, and the geometry of $c$-concave potentials. The cost $c=c(x,y)$ represents the cost of transporting a unit mass at source $x$ to target at $y$. The proofs of our Theorems \ref{A}, \ref{B}, \ref{retract1}, \ref{retract2} require cost functions $c: X\times Y \to \bR \cup \{+\infty\}$ satisfying important assumptions labelled (A0), $\ldots$, (A6). Abbreviate $c_y(x):=c(x,y)$. The assumptions are the following:

\begin{itemize}
\item[\textbf{(A0)}] The cost $c:X\times Y \to \bR \cup \{+\infty\}$ is continuous throughout $dom(c) \subset X\times Y$ and uniformly bounded from below, e.g. $c\geq 0$. Moreover we assume the sublevels $\{x\in X~|~ c(x,y) \leq t\}$ are compact subsets of $X$ for every $t\in \bR$, $y\in Y$. Thus we assume $x\mapsto c(x,y)$ is \emph{coercive} for every $y\in Y$.

\item[\textbf{(A1)}] The cost is twice continuously differentiable with respect to the source variable $x$, uniformly in $y$ throughout $dom(c)$. So for every $y\in Y$, the Hessian function $x\mapsto \nabla_{xx}^2 c(x,y)$ exists and is continuous throughout $dom(c_y)$. 

\item[\textbf{(A2)}] The function $(x,y)\mapsto ||\nabla_x c(x,y)||$ is upper semicontinuous throughout $dom(c)$. So for every $t\in \bR$ the superlevel set $\{ ||\nabla_x c(x,y)|| \geq t\}$ is a closed subset of $dom(c)$. 

\item[\textbf{(A3)}] For every $y\in Y$, we assume $x'\mapsto \nabla_x c(x',y)$ does not vanish identically on any open subset of $dom(c_y)$.  

\item[\textbf{(A4)}] The cost satisfies (Twist) condition with respect to the source variable throughout $dom(c)$. So for every $x'$ the rule $y\mapsto \nabla_x c(x',y)$ defines an injective mapping $dom(c_{x'}) \to T_{x'} X$. 

\item[\textbf{(A5)}]\label{A+} For every $x\in X$, the function $y\mapsto c(x,y)$ is continuously differentiable; and for every $y\in Y$, the gradients $\nabla_y c(x,y)$ are bounded on compact subsets $K\subset X$.
\end{itemize}

Consequences of Assumptions (A0), (A1), $\ldots$ will be elaborated below. In the simplest case where the source and target spaces $X,Y$ are compact, and $c$ is smooth and finite-valued throughout $X\times Y$, then Assumptions (A0)--(A2) are readily confirmed. Assumption (A3) forbids the cost $c(x,y)$ from being locally constant on any open subset of $X$. 

The Assumption (A4) implies the gradients of the cross-differences $\nabla_x \cd(x,y,y')$ are nonzero for distinct $y,y'$, where $\cd(x;y, y'):=c(x,y)-c(x,y')$ is the two-pointed cross difference. In practice, costs $c$ which have poles, e.g. $c(x,y)=+\infty$ when $x=y$, will more readily satisfy (A4). Indeed the functions $x\mapsto \cd(x;y, y')$ have critical points if $X$ is closed compact space and $\cd$ is everywhere finite. 

We will prove (A0)--(A4) implies the general uniqueness of $c$-optimal semicouplings. Peculiar to the semicoupling setting is (A4), which requires injectivity of mappings $y\mapsto \nabla_x c(x',y):Y\to T_{x'} X$ for every $x\in X$, where $\dim(Y)< \dim(X)$. The Assumption (A5) is useful in the construction of our deformation retracts in Theorems \ref{retract1}--\ref{retract2} below. 

A further assumption (A6) is necessary for the deformation retracts constructed in the present article. These retracts depend on the nonvanishing of averaged vector fields denoted $\eta_{avg}(x) \in T_x X$. The field $\eta_{avg}$ is an average of a $Y$-parameter family of potentials $\eta(x,y)$ defined with respect to $c$-concave potentials $\psi^{cc}=\psi$, and parameters $\beta\geq 2$, by formulas\begin{equation}\label{introetaavg}
\eta(x,y):=|\psi(y_0)-\psi(y)-\cd(x,y_0,y)|^{-\beta} \nabla_x (c(x,y_0)-c(x,y)) 
\end{equation} for $y_0\in \del^c \phi(x)$, $y\in Y$. 
The formal definition of $\eta_{avg}(x)$ depends on the setting. Typically there is a Radon measure $\bar{\nu}_x$ on $Y$, depending on $x$, absolutely continuous with respect to $\mathscr{H}_Y$, and with average \begin{equation}\label{introavg}
\eta_{avg}(x):=(\bar{\nu}_x[Y])^{-1} \int_Y \eta(x,y)d\bar{\nu}_x(y).
\end{equation}
With these definitions of $\eta_{avg}$ we can now state the assumption (A6): 
\begin{itemize}
\item[\textbf{(A6)}] 
The averaged vectors $\eta_{avg}(x)$ are bounded away from zero, uniformly with respect to $x\in Z'$ on the relevant subsets $Z'$ of $X$.
\end{itemize}
See \eqref{avgI}, \eqref{avgII}, \ref{whs} and the hypotheses of \ref{retract1}, \ref{retract2}. 

N.B. The assumption (A6) is defined relative to substs $Z'$ of $X$. In practice, the subsets $Z'$ will be  subsets of the form $Z'(x):=Z(\del^c \psi^c(x))$, $X-A$, $Z_j - Z_{j+1}$. More precise formulations of (A6) are given in \ref{a5spec}, \eqref{maxineq}, and the Uniform Halfspace (UHS) Conditions defined in \ref{whs}. The assumption (A6) depends on properties of $c$-concave potentials defined on $Y$, and is not an absolute assumption on the geometry of the cost $c$ like the previous (A0)--(A5).

\section{Optimal Semicouplings and Monge-Kantorovich Duality}\label{22-1}
Now we briefly introduce the optimal semicoupling program and Kantorovich's dual max program.  
\begin{dfn}[Semicoupling]
A \emph{semicoupling} between source $(X,\sigma)$ and target $(Y, \tau)$ is a Borel-Radon measure $\pi$ on the product space $X\times Y$ with $proj_Y \# \pi=\tau$ and $proj_X \# \pi \leq \sigma$. 
\end{dfn} The inequality $proj_X \# \pi \leq \sigma$ holds if for every Borel subset $O$, the numerical inequality $(proj_X \# \pi )[O]\leq \sigma[O]$ is satisfied. We remark that $\pi$ is a coupling between $\sigma$ and  $\tau$ when $proj_X \# \pi = \sigma$.

\begin{dfn}[$c$-transforms]
If $\psi: Y\to \bR \cup \{-\infty\}$ is any function on the target $Y$, then the $c$-Legendre transform $\psi^c: X\to \bR \cup\{+\infty\}$ is defined by $$\psi^c (x):=\sup_{y\in Y} [\psi(y)-c(x,y)], $$ for $x\in X$. 

If $\phi: X \to \bR \cup \{+\infty \} $ is any function on the source $X$, then the $c$-Legendre transform $\phi^c: Y\to \bR \cup \{-\infty\}$ is defined by the rule $$\phi^c(y)=\inf_{x\in X} [c(x,y) + \phi(x)],$$ for $y\in Y$. 
\end{dfn}

\begin{dfn}
[$c$-concavity]\label{c-concavity}
A function $\psi: Y\to \bR \cup \{-\infty\}$ is $c$-concave if $\psi^{cc}=(\psi^{c})^c$ coincides pointwise with $\psi$. Equivalently $\psi$ is $c$-concave if there exists a lower semicontinuous function $\phi: X\to \bR \cup \{+\infty\}$ such that $\phi^c = \psi$ pointwise. 
\end{dfn}

The above definitions imply $\psi, \psi^c$ satisfy the pointwise inequality 
\begin{equation} 
-\psi^c (x)+\psi(y) \leq c(x,y) \label{cal}
\end{equation}
 for all $x \in X$, $y\in Y$. The inequality \eqref{cal} and especially the case of equality is very important for our applications.
\begin{dfn}
[$c$-subdifferential]\label{subdif}
Let $\psi: Y\to \bR \cup \{-\infty\}$ be $c$-concave potential $\psi^{cc}=\psi$. Select $y_0\in Y$ where $\psi(y_0)$ is finite-valued. The subdifferential $\del^c \psi(y_0) \subset X$ consists of those points $x'\in X$ such that $$-\psi^c(x')+\psi(y_0) = c(x', y_0).$$ Or equivalently such that for all $y\in Y$, $$\psi(y)-c(x',y) \leq \psi(y_0)-c(x',y_0).$$
\end{dfn}

The main technical result we need is Monge-Kantorovich duality. Let $SC(\sigma, \tau) \subset \mathscr{M}_{\geq 0} (X \times Y)$ denote the set of all semicoupling measures $\pi$ between source $\sigma$ and target $\tau$.
\cite[Theorem 5.10, pp.57]{Vil1}.
\begin{mkthm}\label{maxI}
Let $c: X\times Y \to \bR\cup\{+\infty\}$ be a cost satisfying Assumptions (A0)--(A5). Let $\sigma, \tau$ be source and target measures on $X$, $Y$, respectively. Assume $\sigma, \tau$ are absolutely continuous with respect to Hausdorff measures $\sH_X$, $\sH_Y$, respectively. 

Then there exists unique closed domain $A\hookrightarrow X$ called the activated domain such that the unique $c$-optimal semicoupling $\pi_{opt}$ defines a coupling from $1_A.\sigma$ to $\tau$. 

Moreover there exists $c$-concave potentials $\psi^{cc}=\psi$ on $Y$ and $c$-convex potentials $\phi=\psi^c$ on $X$ solving Kantorovich's dual max program: 

\begin{equation}
\max_{\psi ~c\text{-concave}} [\int_A -\psi^c(x) d\sigma(x) + \int_Y \psi(y) d\tau(y)]= \min_{\pi\in SC(\sigma, \tau)} [ \int_{X\times Y} c(x,y) d\pi(x,y)] 
\label{mkdualeq}\end{equation}

Moreover the $c$-optimal semicoupling $\pi$ is supported on the graph of the $c$-subdifferentials $\del^c \psi^c(x)$, hence equality $-\psi^c(x)+\psi(y)=c(x,y)$ holds $\pi$-a.e.

\end{mkthm}

Assumptions (A0), $\ldots$, (A4), on the cost $c$ imply various properties of $c$-convex potentials and $c$-subdifferentials. The first useful property is that $c$-subdifferentials are nonempty wherever the potentials $\phi(x)$ or $\psi(y)$ are finite, see Lemma \ref{psilem1} below. Recall the domain of $\phi$ is defined $dom(\phi):=\{x\in X$ $| \phi(x) <+\infty\}$.

\begin{lem}\label{psilem1}
Let $c: X\times Y \to \bR$ be a cost satisfying Assumptions (A0)--(A2). Let $\psi: Y\to \bR \cup \{-\infty\}$ be a $c$-concave potential $(\psi^c)^c=\psi$. Abbreviate $\phi=\psi^c$. Suppose there exists $y' \in Y$ such that $\psi(y) \neq -\infty$. Then:

(i) $\psi$ is an upper semicontinuous function; and

(ii) $\del^c \psi(y)$ is a nonempty closed subset of $X$ for every $y\in dom(\psi)$; and 

(iii) $\phi$ is lower semicontinuous function; and 

(iv) $\del^c \psi(y)$ is a nonempty closed subset of $Y$ for every $x\in dom(\phi)$. 

\end{lem}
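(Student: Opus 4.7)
The four claims split into a pair of semicontinuity statements (i), (iii) and a pair of subdifferential statements (ii), (iv). My plan is to work throughout with the dual formulas $\psi(y)=\inf_{x}[c(x,y)+\phi(x)]$ and $\phi(x)=\sup_{y}[\psi(y)-c(x,y)]$ afforded by the identity $\psi^{cc}=\psi$, and to deduce everything from (A0) plus elementary lower/upper semicontinuity of inf/sup of continuous families.

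Parts (i) and (iii) are immediate once one identifies the right parametrized family of continuous functions. For (i), for each $x\in dom(\phi)$ the map $y\mapsto c(x,y)+\phi(x)$ is continuous by (A0), and an infimum of continuous (hence upper semicontinuous) functions is upper semicontinuous, so $\psi$ is usc. For (iii), for each $y\in dom(\psi)$ the map $x\mapsto \psi(y)-c(x,y)$ is continuous on $dom(c_y)$, and a supremum of continuous functions is lower semicontinuous, so $\phi$ is lsc. The hypothesis that $\psi$ is finite at some $y'$ guarantees that both families are nontrivial.

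For (ii), fix $y_0\in dom(\psi)$ and set $F(x):=c(x,y_0)+\phi(x)-\psi(y_0)$. By the duality inequality \eqref{cal} $F\geq 0$, and by (iii) $F$ is lsc, so $\del^c\psi(y_0)=F^{-1}(\{0\})$ is closed as the zero locus of a lsc nonnegative function. For nonemptiness I would take a minimizing sequence $x_n$ with $F(x_n)\to 0$, extract a convergent subsequence $x_n\to x_*$ via the direct method, and invoke lsc of $F$ to conclude $F(x_*)=0$, hence $x_*\in \del^c\psi(y_0)$. Part (iv), interpreted as the statement $\del^c\phi(x_0)\subset Y$ is nonempty and closed for $x_0\in dom(\phi)$, follows by the symmetric argument applied to maximizing sequences for the usc function $y\mapsto \psi(y)-c(x_0,y)$.

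The chief technical obstacle is the compactness step in (ii), and a fortiori in (iv). In (ii), coercivity from (A0) is only in the source variable, so one must balance the sublevel bound $c(x_n,y_0)+\phi(x_n)\leq \psi(y_0)+\epsilon$ against the pointwise floor $\phi(x_n)\geq \psi(y')-c(x_n,y')$ coming from the fixed $y'\in dom(\psi)$; the two together bound the cross-difference $c(x_n,y_0)-c(x_n,y')$, which must be combined with the coercivity of $c(\cdot,y_0)$ to confine $\{x_n\}$ to a compact region of $X$. For (iv), no analogous coercivity in $y$ is assumed under (A0)--(A2), so I expect the argument either to be purely local near $x_0$ or to rely implicitly on compactness of the relevant region of $Y$ inherited from the target geometry.
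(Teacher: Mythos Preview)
The paper omits the proof entirely (``We omit the standard arguments''), so there is no argument to compare against. Your outline is precisely the standard route one would expect: semicontinuity from inf/sup of continuous families, closedness of subdifferentials as zero loci of semicontinuous nonnegative functions, and nonemptiness via the direct method. You have also correctly isolated the only nontrivial step, namely the compactness needed to pass to a limit in the minimizing/maximizing sequences.

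Two small comments. First, your cross-difference bound in (ii) does not by itself yield a sublevel bound on $c(\cdot,y_0)$; to close the loop you need a uniform lower bound on $\phi$ along the minimizing sequence, which comes most cleanly from $c\geq 0$ together with the finiteness of $\psi(y_0)$ itself (giving $c(x_n,y_0)\leq \psi(y_0)+\epsilon-\phi(x_n)$ and $\phi(x_n)\geq \psi(y_0)-c(x_n,y_0)$, hence $c(x_n,y_0)$ bounded once you know $\phi$ does not dive to $-\infty$). In practice one usually assumes either $Y$ compact or $\phi$ bounded below to make this immediate. Second, your observation about (iv) is apt: under (A0)--(A2) alone there is no coercivity in the target variable, so nonemptiness of $\del^c\phi(x)$ genuinely leans on compactness of $Y$ (or of the relevant sublevel in $Y$), which the paper tacitly assumes in its manifold-with-corners setting. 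Since the paper declines to spell any of this out, your sketch is at least as complete as what is offered.
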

\begin{proof}
We omit the standard arguments.

\end{proof}



\section{(Twist) Condition}\label{uos}
Thus far we have established the existence of optimal semicouplings and uniqueness of active domains. Now we describe the (Twist) hypothesis and the uniqueness of optimal couplings when the source measure $\sigma$ is absolutely continuous with respect to the reference source measure $\mathscr{H}_X^d$ in $X$. 

The next definition elaborates Assumption (A4) from \S \ref{costassump}.
\begin{dfn}[\textbf{(Twist)}]\label{twistdef}
Let $c: X\times Y \to \bR$ be cost function satisfying Assumptions (A0)--(A1). Then $c$ satisfies (Twist) condition if for every $x'\in X$ the rule $$y\mapsto \nabla_x c(x',y) $$ defines an injective mapping $\nabla_x c(x',\cdot):dom(c_{x'})\to T_{x'} X$. 
\end{dfn}

Observe that (Twist) condition is equivalent to the function $$x\mapsto \cd(x;y_0, y_1):=c(x,y_0)-c(x,y_1)$$ admitting no critical points on $X$, whenever $y_0, y_1 \in Y$ are distinct. If $X$ is closed manifold, then $x\mapsto \cd(x;y_0,y_1)$ certainly has critical points which violates (Twist). Our settings assume $X$ is a manifold-with-corners with nontrivial boundary $\del X \neq \emptyset$. The (Twist) condition requires $\cd$ admit no critical points on the interior of $X$, and all maxima/minima exist on the boundary. For instance, the repulsion costs constructed in \cite{martel} have the property that $\cd(x;y_0,y_1)$ converges to $-\infty$ when $x\to y_1$, and converges to $+\infty$ when $x\to y_0$, and all other level sets $\cd(-;y_0,y_1)^{-1}(s) \subset X$ are topologically connected and separating $X$ into two components, for every $s\in \bR$,

\begin{prop}\label{lipsi}
Let $c: X\times Y \to \bR$ be a cost satisfying Assumptions (A0)--(A5). Let $\psi: Y\to \bR \cup \{-\infty\}$ be a $c$-concave potential. Then $\psi$ is locally Lipschitz on its domain $\psi: dom(\psi) \to \bR$. 
\end{prop}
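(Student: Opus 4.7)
The plan is to exploit $c$-concavity to write $\psi(y)=\inf_{x\in X}[c(x,y)+\phi(x)]$ with $\phi=\psi^c$, and to realize $\psi$ locally as the infimum of a family of $C^1$-in-$y$ functions admitting a uniform Lipschitz modulus on a compact neighborhood of any fixed $y_0\in dom(\psi)$.

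First I would fix $y_0\in dom(\psi)$ and invoke Lemma~\ref{psilem1}(ii) to select a point $x_0\in\del^c\psi(y_0)$, so that $\phi(x_0)=\psi(y_0)-c(x_0,y_0)$ is finite. Since $y\mapsto c(x_0,y)$ is continuously differentiable by (A5), the trivial pointwise bound $\psi(y)\le\phi(x_0)+c(x_0,y)$ gives
\[
\psi(y)-\psi(y_0)\;\le\;c(x_0,y)-c(x_0,y_0)\;\le\;L_0\,d_Y(y,y_0)
\]
on a compact neighborhood $V$ of $y_0$, where $L_0:=\sup_{y\in V}\|\nabla_y c(x_0,y)\|<\infty$. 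This already supplies the upper half of the Lipschitz estimate and, in particular, a uniform upper bound $\psi\le M$ on $V$.

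For the matching reverse bound I would, for each $y\in V\cap dom(\psi)$, select $x_y\in\del^c\psi(y)$ via Lemma~\ref{psilem1}(ii) and apply the subdifferential inequality to get
\[
\psi(y_0)-\psi(y)\;\le\;c(x_y,y_0)-c(x_y,y).
\]
Once one knows that the collection of $x_y$'s is confined to some compact $K\subset X$, assumption (A5) yields a uniform constant $L=L(K,V)$ with $|c(x,y)-c(x,y_0)|\le L\,d_Y(y,y_0)$ for all $x\in K$ and $y\in V$, and the desired estimate $|\psi(y)-\psi(y_0)|\le\max(L_0,L)\,d_Y(y,y_0)$ follows.

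The hard part, and the real content of the argument, is therefore the compactness of $\bigcup_{y\in V\cap dom(\psi)}\del^c\psi(y)$ in $X$. I expect to establish this by contradiction using the coercivity hypothesis (A0): a sequence $x_n\in\del^c\psi(y_n)$ escaping every compact of $X$, with $y_n\in V\cap dom(\psi)$, would force $c(x_n,y_0)\to+\infty$. This must be reconciled with the defining identity $\phi(x_n)+c(x_n,y_n)=\psi(y_n)\le M$, the upper semicontinuity of $\psi$ (Lemma~\ref{psilem1}(i)), the lower semicontinuity of $\phi$ (Lemma~\ref{psilem1}(iii)), and the dual inequalities $\phi(x_n)\ge\psi(y')-c(x_n,y')$ valid for every $y'\in Y$. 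Testing with $y'=y_0$ and exploiting the Lipschitz control of $c(x_0,\cdot)$ on $V$ from (A5) should pin down the contradiction and thereby the compact set $K$, closing the proof.
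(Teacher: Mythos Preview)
The paper does not supply a proof of Proposition~\ref{lipsi}; it states the result and moves on. Hence there is no argument in the text to compare your plan against.

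Your plan is the standard route (compare Villani, \emph{Optimal Transport: Old and New}, Appendix~C and Theorem~10.26): use $c$-concavity to sandwich $\psi(y)-\psi(y_0)$ between two cost increments and then invoke (A5) to convert those increments into a distance bound. You have correctly isolated the only nontrivial step, namely the compactness of a set containing suitable subdifferential points $x_y\in\del^c\psi(y)$ as $y$ ranges over a compact neighbourhood $V$ of $y_0$. One caution: the contradiction you sketch needs slightly more than pointwise coercivity in~(A0). From $x_n\in\del^c\psi(y_n)$ you get $c(x_n,y_n)=\psi(y_n)-\phi(x_n)$ and $\phi(x_n)\ge\psi(y_0)-c(x_n,y_0)$, which together yield only $c(x_n,y_n)\le c(x_n,y_0)+M-\psi(y_0)$; to force $c(x_n,y_n)\to+\infty$ from $x_n\to\infty$ when $y_n$ is merely varying in $V$ you will want either a uniform-in-$y$ coercivity on compact $y$-sets (which follows from continuity of $c$ on $X\times V$ plus the compact sublevels in~(A0), after passing to $y_n\to y_*\in V$) or an a~priori lower bound on $\phi$ along the sequence. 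Making that step explicit is the only real gap in your plan; once it is filled the rest goes through exactly as you outline.
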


Recall the definition of semiconvexity \cite[Definition 10.10, pp.228]{Vil1}: 
\begin{dfn}[Semiconvexity]
A function $\phi: X \to \bR \cup \{+\infty\}$ is semiconvex on an open subset $U$ of $X$ with modulus $C>0$ at $x_0\in X$ if for every constant-speed geodesic path $\gamma(t)$, for $0\leq t\leq 1$ whose image is included in $U$, the inequality 
\begin{equation}\phi(\gamma(t)) \leq (1-t) \phi(\gamma(0)) +t \phi(\gamma(1)) +t(1-t) C dist(\gamma(0), \gamma(1))^2 \label{scineq} 
\end{equation} is satisfied for $0\leq t \leq 1$.
The function is locally semiconvex if $\phi$ is semiconvex at every $x_0\in U$, with respect to a modulus $C>0$ depending uniformly on $\gamma(0)$, $\gamma(1)$ varying in compact subsets $K$ of $U$. 
\end{dfn}

\begin{lem}\label{lip2}
Let $c: X\times Y \to \bR$ be cost satisfying Assumptions (A0)--(A2). Then every $c$-convex potential $\psi^c: X\to \bR \cup \{+\infty\}$ is $\mathscr{H}_X$-almost everywhere locally-Lipschitz on its domain $dom(\psi^c) \subset X$. Furthermore every $c$-convex potential is locally-semiconvex on $dom(\psi^c)$. 
\end{lem}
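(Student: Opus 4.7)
The plan is to represent $\psi^c(x)=\sup_{y\in Y}[\psi(y)-c(x,y)]$ as a pointwise supremum of the family $f_y(x):=\psi(y)-c(x,y)$ and to transfer uniform $C^2$-bounds on this family to the supremum. Concretely, I want to establish the locally-semiconvex claim first, and then deduce the local Lipschitz claim as a standard consequence of local semiconvexity (which gives local Lipschitz on the interior of the domain, hence $\mathscr{H}_X$-almost everywhere on $dom(\psi^c)$).

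First I would fix an interior point $x_0\in dom(\psi^c)$ and a compact neighborhood $K\ni x_0$ contained in the interior. The key preparatory step is to confine the relevant parameters: using the coercivity in (A0), together with the uniform upper bound that $\psi^c$ enjoys on $K$, I would show that the supremum defining $\psi^c(x)$ for $x\in K$ is attained in (and hence may be restricted to) a fixed compact set $K'\subset Y$. Lemma \ref{psilem1}(iv) guarantees that the supremum is actually attained, and upper semicontinuity of $\psi$ (Lemma \ref{psilem1}(i)) plus coercivity of $c(\cdot,y)$ are what makes the attainment locus stay compact as $x$ varies in $K$.

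With this reduction in hand, on the compact product $K\times K'$ assumption (A2) yields a uniform pointwise bound on $\|\nabla_x c(x,y)\|$ (since an upper semicontinuous function attains its supremum on a compact), and assumption (A1) yields a uniform pointwise bound on the Hessian $\nabla_{xx}^2 c(x,y)$ (since a continuous function on a compact is bounded). The Hessian bound translates into a single semiconvexity modulus $C>0$ which works simultaneously for every $f_y$ with $y\in K'$, in the sense of \eqref{scineq}. The supremum of a family of functions sharing a common semiconvexity modulus again satisfies \eqref{scineq} with the same modulus: this follows by writing the inequality for each $f_y$ and using $\sup_y[A_y+B_y]\leq \sup_y A_y+\sup_y B_y$ on the right-hand side. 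Hence $\psi^c$ is locally semiconvex on the interior of $dom(\psi^c)$ with modulus $C$.

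Finally, for the local Lipschitz assertion, I would invoke the standard fact that any locally semiconvex function decomposes locally as a convex function minus a smooth ($C^2$) quadratic term (in a chart the function $x\mapsto \psi^c(x)+\tfrac{C}{2}|x|^2$ becomes convex), and convex functions are automatically locally Lipschitz in the interior of their effective domain. This gives the $\mathscr{H}_X$-almost-everywhere local Lipschitz property on $dom(\psi^c)$. The main obstacle I anticipate is the compactification step: ensuring that one genuinely only needs $y$ in a fixed compact set $K'\subset Y$ to compute $\psi^c$ on a given compact $K\subset X$. This requires careful simultaneous use of (A0) coercivity, finiteness of $\psi^c$ on $K$, and upper semicontinuity of $\psi$; without this reduction, the uniform Hessian and gradient bounds from (A1)-(A2) would not be available and the argument for uniform semiconvexity would break down.
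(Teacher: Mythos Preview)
The paper's own proof is simply ``Omitted,'' so there is nothing to compare against; your outline is the standard argument (essentially \cite[Thm.~10.26]{Vil1}) and is sound in spirit.

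One caveat on the step you yourself flag as the main obstacle: Assumption (A0) gives coercivity of $c(\cdot,y)$ in the \emph{source} variable $x$, not in $y$, so it does not directly confine the maximizers to a compact $K'\subset Y$ as you suggest. Fortunately this reduction is unnecessary. The paper reads (A1) as giving a Hessian bound $\nabla_{xx}^2 c(x,y)\leq C\cdot Id$ that is locally uniform in $x$ and \emph{uniform in all} $y$ (see the proof of Lemma~\ref{ldc}: ``Assumption (A1) implies $x\mapsto \nabla^2_{xx} c(x,y)$ is locally bounded above on $X$, uniformly in $y$''). With that reading, every $f_y(x)=\psi(y)-c(x,y)$ already satisfies \eqref{scineq} on $K$ with the same modulus $C$, regardless of $y$, and your supremum argument goes through without ever restricting to $K'$. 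The local Lipschitz conclusion then follows exactly as you say, via the local decomposition $\psi^c+\tfrac{C}{2}|x|^2$ convex in a chart.
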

\begin{proof}
Omitted.
\end{proof}





\begin{prop}\label{lip3} 
Let $c$ be cost satisfying Assumptions (A0)--(A2). Then $c$-convex potentials $\psi^c$ are $\mathscr{H}_X$-almost everywhere differentiable on $dom(\psi^c) \subset X$. Thus $dom(D \psi^c)$ is a full $\mathscr{H}_X$-measure subset of $dom(\psi^c)$.
\end{prop}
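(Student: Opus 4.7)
The plan is to obtain Proposition \ref{lip3} as a direct consequence of Lemma \ref{lip2} combined with a Rademacher/Alexandrov-type theorem on the manifold-with-corners $X$. Since the statement is local, I would work in coordinate charts: cover $dom(\psi^c)$ by a countable collection of relatively compact coordinate patches $U_\alpha \subset X$ on which the Riemannian volume measure and the Hausdorff measure $\mathscr{H}_X$ are mutually absolutely continuous with respect to Lebesgue measure, and in which $\psi^c$ is locally Lipschitz away from a set of measure zero (by Lemma \ref{lip2}). It then suffices to prove the claim patch-by-patch and take a countable union.

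Next, within each patch $U_\alpha$, I would invoke the classical Rademacher theorem: a locally Lipschitz function on an open subset of $\bR^n$ is differentiable Lebesgue-a.e., hence $\mathscr{H}_X$-a.e. In fact, Lemma \ref{lip2} gives local semiconvexity of $\psi^c$, so one could alternatively invoke Alexandrov's theorem to obtain the stronger conclusion that $\psi^c$ is twice differentiable a.e.; but for the present statement first-order differentiability from Rademacher is enough. The passage from first-order differentiability in a chart to the existence of the intrinsic differential $D\psi^c(x) \in T_x^* X$ is routine since the transition maps between charts are smooth and preserve differentiability and the null sets.

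The only delicate point is the handling of the boundary and corner strata of $X$. Here I would argue stratum-by-stratum: the boundary and corner strata have strictly lower dimension than $X$ itself, and $\mathscr{H}_X$ (interpreted as $d$-dimensional Hausdorff measure with $d=\dim X$) assigns them measure zero, so they can be discarded and differentiability need only be checked on the open interior stratum, where the usual Euclidean Rademacher argument applies without modification. Union-ing the a.e.-differentiability sets across the countable cover $\{U_\alpha\}$ then produces a full $\mathscr{H}_X$-measure subset $dom(D\psi^c) \subset dom(\psi^c)$, as required.

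The main obstacle I anticipate is purely bookkeeping: verifying that the null sets appearing in Lemma \ref{lip2} (where local Lipschitzness can fail) are compatible with the null sets excluded by Rademacher, so that their union remains $\mathscr{H}_X$-null and the complement is genuinely full-measure in $dom(\psi^c)$ rather than only in its interior. Since both exclusions are countable unions of Lebesgue-null sets in charts, this is straightforward but must be stated carefully.
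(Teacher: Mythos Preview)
Your proposal is correct and follows essentially the same route as the paper: invoke Lemma \ref{lip2} for local Lipschitzness (and semiconvexity), then apply Rademacher's theorem to conclude almost-everywhere differentiability. The paper's proof is terser---it simply cites a manifold version of Rademacher from Villani \cite[Thm 10.8]{Vil1} rather than passing to charts---but your more explicit treatment of the chart-by-chart reduction, the corner strata, and the bookkeeping of null sets is a faithful unpacking of the same argument.
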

\begin{proof}
According to Lemma \ref{lip2}, the $c$-convex potentials $\psi^c$ are locally Lipschitz on their domains $dom(\psi^c) \subset X$. Rademacher's theorem, \cite[Thm 10.8, pp.222]{Vil1}, says locally Lipschitz functions are almost-everywhere differentiable on $dom(\psi^c)$ with respect to $\mathscr{H}_X$ on their domains. Therefore $\nabla_x \psi^c$ exists almost everywhere on $dom (\psi^c)$ as desired.
\end{proof}

\section{Statement of Main Results }
 The unique $c$-optimal semicouplings do not ``activate" all of the source measure whenever we have strict inequality $\int_X \sigma > \int_Y \tau$.  Equivalently the domain $dom(\psi^c)$ is a nontrivial subset of the source $X$. Informally $c$-optimal semicouplings first allocate as much as possible from low-cost regions of the source. The union of all these activated low-cost regions defines a domain designated $A \hookrightarrow X$. Specifically we define $A:=\cup_{y\in Y} \del^c \psi(y)$.

Our first Theorem \ref{A} describes a criteria to ensure that the canonical inclusion $A\hookrightarrow X$ is a homotopy-isomorphism.
\begin{thm}\label{A} 
Let $c$ be cost satisfying Assumptions (A0)--(A4), and let $\sigma$, $\tau$ be absolutely continuous with respect to $\mathscr{H}_X$, $\mathscr{H}_Y$, respectively and satisfying \eqref{abundant}. Let $\pi$ be a $c$-optimal semicoupling from $\sigma$ to $\tau$, with dual $c$-concave potential $\psi^{cc}=\psi$ , \ref{maxI}. Let $A=dom(\psi^c)$ be the active domain of the unique $c$-optimal semicoupling. Let $\beta:=\dim(Y)+2$. Suppose every $x\in X-A$ has the property that \begin{equation} \label{thmaeta}
\eta_{avg}(x):=(\mathscr{H}_Y[Y])^{-1} \int_{Y} (c(x,y)-\psi(y))^{-\beta}\cdot\nabla_x c(x,y) ~d\mathscr{H}_Y(y),  
\end{equation} is bounded away from zero, uniformly with respect to $y\in Y$. 

Then the inclusion $A \hookrightarrow X$ is a homotopy-isomorphism, and there exists strong deformation retract $X\leadsto A$. 
\end{thm}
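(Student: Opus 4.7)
The plan is to construct a continuous flow on $X-A$ whose trajectories reach $A$ in finite time, and then reparametrize to produce a strong deformation retract $H:X\times[0,1]\to X$. The vector field $\eta_{avg}$ serves this role: up to sign and normalization, it is (minus) the gradient of a Lyapunov function $V$ which grows without bound along the flow and diverges as $x\to\partial A$.

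After an additive renormalization of $\psi$ making $c-\psi$ nonnegative on $(X-A)\times Y$ (permitted since Kantorovich duality fixes $\psi$ only up to an additive constant), introduce
\begin{equation*}
V(x) \;:=\; \frac{1}{\beta-1}\int_Y \bigl(c(x,y)-\psi(y)\bigr)^{-(\beta-1)}\, d\mathscr{H}_Y(y),
\end{equation*}
so that $\nabla_x V = -\mathscr{H}_Y[Y]\cdot \eta_{avg}(x)$. Regularity $V\in C^1(X-A)$ and the exchange of derivative and integral are justified by (A1), Proposition \ref{lipsi}, and the exponent $\beta=\dim(Y)+2$. A standard scaling analysis near a point $y_0(x)$ minimizing $c(x,\cdot)-\psi$ gives $V(x)\sim \delta(x)^{-\dim(Y)/2-1}$ as $\delta(x):= c(x,y_0(x))-\psi(y_0(x))\to 0^+$, so $V\to+\infty$ as $x\to\partial A$.

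Now consider the autonomous ODE $\dot\gamma(t)=-\eta_{avg}(\gamma(t))$ with $\gamma(0)=x\in X-A$. Picard--Lindel\"of gives unique solutions with continuous dependence on initial data, and the chain rule yields
\begin{equation*}
\tfrac{d}{dt} V(\gamma(t)) \;=\; \mathscr{H}_Y[Y]\,|\eta_{avg}(\gamma(t))|^2 \;\geq\; \mathscr{H}_Y[Y]\,\epsilon^2 \;>\; 0,
\end{equation*}
where $\epsilon>0$ is the uniform lower bound from hypothesis \eqref{thmaeta}. Coercivity (A0) confines $\gamma$ to a compact set $K\subset X$, and since $V$ is bounded on compact subsets of $X-A$ but grows without bound along $\gamma$, the trajectory must reach $A$ at some finite hitting time $\tau(x)<\infty$; that the limit lies in $A$, not merely in $\bar A$, follows from closedness of $A$ (\ref{maxI}) together with Lemma \ref{psilem1}. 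Setting $\tau(a):=0$ on $A$, define
\begin{equation*}
H(x,t) \;:=\; \gamma_x\bigl(t\,\tau(x)\bigr)\ \text{on}\ (X-A)\times[0,1], \qquad H(x,t) \;:=\; x\ \text{on}\ A\times[0,1].
\end{equation*}

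The main obstacle is continuity of $H$ at points $(a,t)$ with $a\in\partial A$, which reduces to verifying $\tau(x_n)\to 0$ and that the arclength of $\gamma_{x_n}|_{[0,\tau(x_n)]}$ tends to zero as $x_n\to a$. The same Taylor--scaling analysis gives $|\eta_{avg}(x)|\sim\delta(x)^{-\dim(Y)/2-2}$, and combining with the lower bound $\dot V\geq \mathscr{H}_Y[Y]\epsilon^2$ shows that both the hit time and the arclength scale as strictly positive powers of $\delta(x)$, hence vanish as $x\to\partial A$. Making these asymptotics uniform in the basepoint $a\in\partial A$ is the crux of the argument, and is precisely where the \emph{uniformity} of the lower bound in hypothesis \eqref{thmaeta}, as opposed to mere pointwise positivity, is essential. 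Routine secondary bookkeeping includes the additive normalization of $\psi$, ruling out escape of $\gamma$ through $\partial X$ (via (A0) and (A5)), and verifying the standard axioms $H(\cdot,0)=\mathrm{id}_X$, $H(X,1)\subseteq A$, and $H(a,t)=a$ for $a\in A$, $t\in[0,1]$.
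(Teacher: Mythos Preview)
Your approach is essentially the paper's: recognize $\eta_{avg}$ as the gradient of a potential (your $V$, the paper's $f_{avg}$ in Lemma~\ref{avgpot}), integrate the resulting field on $X-A$, establish finite-time arrival at $\partial A$ together with continuous dependence of the hitting time (the paper packages these as Lemmas~\ref{ftbu} and~\ref{maxinterval} via the asymptotic-convexity Lemmas~\ref{conc}--\ref{conc1}, whereas you reach the same conclusions by a direct scaling analysis in $\delta(x)=\min_y(c(x,y)-\psi(y))$), and then reparametrize by $t\mapsto t\cdot\tau(x)$. Two minor remarks: the additive renormalization of $\psi$ is unnecessary, since the proof's characterization $A=\bigcup_{y}\{c(\cdot,y)\le\psi(y)\}$ already gives $c(x,y)-\psi(y)>0$ on $(X-A)\times Y$; and your flow direction $\dot\gamma=-\eta_{avg}$ is the self-consistent one (it increases $V$, which diverges at $\partial A$), while the paper writes $x'=+\eta_{avg}$.
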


The condition \eqref{thmaeta} is an assumption similar to (A6), and is key technical hypothesis for constructing the deformation retract. We remark that $\eta_{avg}(x)$ is uniformly bounded away from zero whenever the gradients $\nabla_x c(x,y)$, $y\in Y$, occupy a nontrivial halfspace for every $x$. Or equivalently, whenever the closed convex hull of $\nabla_x c(x,y)$, $y\in Y$, is disjoint from the origin $\textbf{0}$ in $T_x X$. Without the estimate $||\eta_{avg}(x)||\geq C >0$, our methods could only conclude that the source $X$ deformation retracts onto $\epsilon$-neighborhoods $A_\epsilon=\{x\in X| d(x,A) <\epsilon\} \hookrightarrow X$ of $A\hookrightarrow X$, where $\epsilon>0$ is a sufficiently small real number. 

%



Now Theorem \ref{A} is an important preliminary result. Our primary motivations are constructing retracts onto large codimension subspaces. Our next Theorem \ref{B} constructs further homotopy-reductions from the active domains $A\subset X$ to higher codimension subvarieties $\sZ$.


Let $Z=Z_{\psi}: 2^Y \to 2^X$ be the singularity functor $Z(Y_I)=\cap_{y\in Y_I} \ysub$. For integers $j\geq 1$, we define $Z_j$ to be the subset of $x\in X$ where the local tangent cone is at least $j$-dimensional at $x$. The formal definition is provided in \S \ref{ksf}, c.f. \ref{dim-est}, \ref{sing-dim}. The singularity structure is naturally cellulated by the cells $$Z'(x):=Z(\del^c\psi^c(x))$$ and admits a natural filtration $$(X=:Z_0)\hh (A=:Z_1) \hh Z_2 \hh Z_3 \hh \cdots $$ of $X$ by ``subvarieties" $Z_0$, $Z_1$, etc. Under the assumptions (A0)--(A3), the cell $Z'(x)$ has well-defined tangent space $T_x Z'(x)$. Let $pr_{Z'}: T_x X \to T_x Z'(x)$ be the orthogonal projection defined relative to the Riemannian structure on $X$.

For the following construction we need introduce an auxiliary function $\nu_x(y)$ which will depend on a source point $x$ and its $c$-subdifferential $\del^c \phi(x)$ relative to a $c$-convex potential $\phi=\psi^c$. The assumptions (A0)--(A5) imply the zeros of $\psi(y_0)-\psi(y) -\cd(x,y,y_0)$ have a well-defined \emph{order} or \emph{multiplicity} as defined in the complex analysis. The order $ord(y_0)$ is a positive integer obtained by taking power series expansion of $\psi(y_0)-\psi(y) -\cd(x,y,y_0)$ with respect to $d_Y(y,y_0)$ for $y$ in a sufficiently small neighborhood of $y_0$.

\begin{dfn}\label{order}
Let $d_Y$ be the metric distance on the target $Y$. Let $\phi=\psi^c$ be a $c$-convex potential on the source $X$. For $x\in dom(\phi)$ assume $\del^c \phi(x)$ has \emph{finite} cardinality. Define \begin{equation}\label{nuorder}
\nu_x(y):=\min(~1~~,~\prod_{y_0 \in \del^c \phi(x)} d_Y(y,y_0)^{~ord(y_0)}),\end{equation} where $ord(y_0)$ is the order of the zero of $\psi(y_0)-\psi(y) -\cd(x,y,y_0)$ at $y=y_0$. 
\end{dfn}


\begin{dfn}
For $x\in X$, abbreviate $Y'(x):=dom(c_x)$. For a real parameter $\beta>2$ and $x\in X$, define the collection of tangent vectors $$ \eta(x, y):=|\psi(y_0)-\psi(y)+\cd(x; y, y_0)|^{-\beta} \cdot pr_{Z'}( \nabla_x \cd(x; y, y_0)). $$ For every $x$, define 
\begin{equation}
\eta_{avg}(x):=(\mathscr{H}_Y[Y'(x)])^{-1} \int_{Y'(x)} \eta(x,y) \cdot \nu^\beta_x (y) \cdot d\mathscr{H}_Y(y). 
\label{avg2}\end{equation} \`A priori, $\eta_{avg}(x)$ is a vector in $T_x Z'$. We say (UHS) Conditions are satisfied on $Z'(x)$ if $\eta_{avg}$ is bounded away from zero uniformly with respect to the source point $x$ (c.f. Definition \ref{whs}).
\end{dfn}
N.B. the parameter $\beta$ appears twice in the definition of $\eta_{avg}$: $\beta$ occurs in the definition of $\eta(x,y)$ and in the exponent of $\nu_x$. 


Our next result identifies the maximal index $J\geq 1$ such that $(A=Z_1) \hh Z_{J+1}$ is a homotopy-isomorphism, and even a strong deformation retract. 
\begin{thm}\label{B} 
Let $c$ be a cost satisfying Assumptions (A0)--(A5). Suppose $\sigma, \tau$ are source, target measures absolutely continuous with respect to $\mathscr{H}_X, \mathscr{H}_Y$, respectively and satisfying \eqref{abundant}. Let $\psi^{cc}=\psi$ be a $c$-concave potential (\ref{maxI}) solving Kantorovich's dual max program, and suppose $\phi=\psi^c$ is a $c$-convex potential such that $\del^c \phi(x)$ has finite cardinality for every $x$. Let $j\geq 1$ be an integer. 

\textbf{(i)} Suppose there exists a parameter $\beta\geq 2$ such that $\eta_{avg}(x')$  is bounded away from zero, uniformly with respect to $x' \in Z_j-Z_{j+1}$; and 

\textbf{(ii)} Suppose the intersection $\sub \cap Z_{j+1} \neq \emptyset$ is nonempty for every $x'\in Z_j-Z_{j+1}$. 

Under the above hypotheses, the inclusion $Z_{j+1} \hookrightarrow Z_j$ is a homotopy-isomorphism, even a strong deformation retract. Furthermore if $J\geq 1$ is the maximal integer such that every $x'\in Z_J-Z_{J+1}$ satisfies conditions (i)--(ii), then the inclusion $Z_{J+1} \hookrightarrow Z_1$ is a homotopy-isomorphism, even a strong deformation retract.
\end{thm}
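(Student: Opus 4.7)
The plan is to adapt the argument behind Theorem \ref{A} to a relative setting: for each $j$ with $1 \leq j \leq J$ I will produce a strong deformation retract $Z_j \leadsto Z_{j+1}$ by integrating an appropriate tangent vector field, and then compose these $J$ retracts to obtain the full retract $Z_1 \leadsto Z_{J+1}$. The averaged field $\eta_{avg}$ of \eqref{avg2} plays exactly the role that it did in Theorem \ref{A}, but the key point is that $\eta_{avg}(x) \in T_x Z'(x)$ by the orthogonal projection $pr_{Z'}$ built into its definition. Since $Z'(x) \subseteq Z_j$ whenever $x \in Z_j$, this tangency guarantees that a flow initiated in $Z_j - Z_{j+1}$ will remain inside $Z_j$, which is what makes a \emph{relative} retract feasible.

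On $Z_j - Z_{j+1}$ I set $V(x) := \eta_{avg}(x)/\|\eta_{avg}(x)\|$, which is well defined by hypothesis (i). Continuity of $V$ follows from upper semicontinuity of $\del^c\phi$ (Lemma \ref{psilem1}), the $C^2$ regularity of $c$ in (A1)--(A2), and the crucial fact that the weight $\nu_x^\beta$ of Definition \ref{order} exactly cancels the $|\psi(y_0)-\psi(y)+\cd(x;y,y_0)|^{-\beta}$ blow-up at $y=y_0$, making the integrand of $\eta_{avg}$ a uniformly integrable, locally continuous family on $Y'(x)$. Integrating $V$ produces a continuous flow $\Phi_t$ that stays in $Z_j$ by the tangency above. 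Hypothesis (ii) provides, for each $x' \in Z_j - Z_{j+1}$, a point in $\sub \cap Z_{j+1}$; this supplies the ``sink'' toward which the integral curve drifts, because $\eta_{avg}$ selects, within $T_{x'} Z'(x')$, the direction along which additional target points become absorbed into $\del^c\phi$, strictly increasing $\dim T_x Z'(x)$ and thus forcing entry into a higher stratum in finite time. Letting $T(x)$ denote the first hitting time of $Z_{j+1}$, I define $H(x,s) := \Phi_{sT(x)}(x)$ for $s \in [0,1)$, $H(x,1) := \lim_{s\to 1^-} H(x,s) \in Z_{j+1}$, and extend by the identity on $Z_{j+1}$; the uniform lower bound in (i) together with continuity of the flow yields continuity of $T$ and hence of $H$.

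For the maximal-$J$ statement, compose the retracts $Z_j \leadsto Z_{j+1}$ for $j = 1, \ldots, J$, reparametrising each on the subinterval $[\tfrac{j-1}{J}, \tfrac{j}{J}]$, to build a strong deformation retract $Z_1 \leadsto Z_{J+1}$. The main obstacle I anticipate is establishing that the flow actually reaches $Z_{j+1}$ in finite, continuously varying hitting time $T(x)$: one must show that $\eta_{avg}$ not only avoids zero but produces a strictly quantifiable descent of a weighted energy controlling the distance to $Z_{j+1}$, despite the fact that the singularity of $|\psi(y_0)-\psi(y)+\cd(x;y,y_0)|^{-\beta}$ worsens as $x$ approaches $Z_{j+1}$. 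The choice $\beta \geq 2$ together with the compensating multiplicities $\nu_x$ are precisely calibrated so that this estimate survives, and carrying it out \emph{along the flow} -- rather than pointwise -- is where the real technical weight of the proof lies.
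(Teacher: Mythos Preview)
Your overall strategy matches the paper's: integrate $\eta_{avg}$ on $Z_j-Z_{j+1}$, show the trajectories reach $Z_{j+1}$ in finite time varying continuously with the initial point, reparametrize to $[0,1]$, and compose over $j=1,\ldots,J$. The paper organizes this cell-by-cell (first on each $Z'(x)\cap Z_j$ in Theorem~\ref{retract1}, then assembling globally in Theorem~\ref{retract2}), but that is a packaging difference, not a mathematical one.

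There is however one substantive deviation that creates a gap. You normalize to $V=\eta_{avg}/\|\eta_{avg}\|$; the paper does \emph{not}. The divergence $\|\eta_{avg}(x)\|\to+\infty$ as $x\to Z_{j+1}$ is not a nuisance to be tamed but the engine of the retract: the factor $\nu_x^{\beta}$ kills the pole of the integrand only at the \emph{current} points of $\del^c\phi(x)$, whereas at the new $\bar y$ about to enter $\del^c\phi$ as $x$ approaches $Z_{j+1}$ the term $|\psi(y_0)-\psi(\bar y)+\cd(x;\bar y,y_0)|^{-\beta}$ blows up unchecked. That blow-up is exactly what gives finite-time convergence of the unnormalized flow (Lemmas~\ref{ftbu} and \ref{L;ft}), and it is the asymptotic convexity of the averaged potential $f_{avg}$ in the direction of its own gradient (Lemma~\ref{conc1}) that makes the flow asymptotically contracting and hence $\omega(x_0)$ Lipschitz in $x_0$ (Lemma~\ref{maxinterval}). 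Your sentence ``the uniform lower bound in (i) together with continuity of the flow yields continuity of $T$'' skips precisely this: continuous dependence on initial data gives only lower semicontinuity of the maximal existence time in general, and genuine continuity of the hitting time needs the contraction argument. Once you normalize you lose both the gradient structure $\eta_{avg}=\nabla_x f_{avg}$ and the blow-up, so you would owe a separate argument for finite and continuous $T$; none is supplied.
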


Theorem \ref{A} has the following application to quadratic costs \begin{equation}\label{bqc} b(x,y):=d(x,y)^2/2=||x-y||^2/2\end{equation} for closed subsets $X,Y$ in a Euclidean space $\bR^N$. The gradients $\{\nabla_x c(x_0,y)\}_{y \in Y}$ are a subset of $T_{x_0} X$. Observe that the closed convex hull $conv\{\nabla_x c(x_0,y)\}_{y \in Y}$ contains the origin $\textbf{0}$ in $T_{x_0} X$ if and only if $x_0$ lies in the convex hull $conv(Y)$ of $Y$ in $\bR^N$. If the activated domain $A$ of a $d^2/2$-optimal semicoupling contains the convex hull $conv(Y) \subset A$, then Theorem \ref{A} implies the inclusion $A\subset X$ is a homotopy-isomorphism. The proof of Theorem \ref{A} constructs an explicit strong deformation retract of $X$ onto $A$.  We find $A$ contains $conv(Y)$ whenever the ratio $\int_X \sigma / \int_Y \tau>1$ is sufficiently close to $1^+$, i.e. whenever the active domain $A$ is a sufficiently large subset of $X$. 

We illustrate with an example inspired by \cite{SturmH}. Let $\sigma=\mathscr{L}$ be a Lebesgue measure on $\bR^N$, and let $\tau=(100)^{-1}\sum_{i=1}^{100} \delta_{y_i}$ be an empirical Poisson sample, some radomnormalized sum of Dirac masses on $\bR^N$. Evidently \eqref{abundant} is satisfied. Consider the restriction of $b$ \eqref{bqc} to $\bR^2 \times Y$. There exists unique $b$-optimal semicoupling from $\sigma$ to $\tau$, and let $A\subset \bR^2$ be the active domain. The active domain $A$ is a union of possibly overlapping Euclidean balls. The non-active domain $\bR^N-A$ is an unbounded open subset of $\bR^N$. Under the hypotheses of \ref{A}, we define an averaged potential $f_{avg}: \bR^N -A \to \bR$ which has property that both $$f_{avg}(x_k)\to +\infty \text{~~and~~}||\nabla_x f_{avg}(x_k)|| \to +\infty $$ whenever $x_k$ is a sequence in $\bR^N-A$ converging to $\lim_k x_k=x_\infty \in \del A$. See \eqref{favg} for definition of $f_{avg}$. Now the hypotheses of \ref{A} require $f_{avg}$ have no finite critical points on the open subset $\bR^N-A$. But the nonexistence of critical points can be achieved by a simple observation: if $A\supset conv(Y)$, then for every $x\in \bR^N-A$ the gradients $\nabla_x c(x,y)$, $y\in Y$ occupy a nontrivial Halfspace of $T_x \bR^N$ with definite lengths, and this implies $\nabla_x f_{avg}$ is uniformly bounded away from zero.

The hypotheses of \ref{A} are never satisfied when we restrict $b$ \eqref{bqc} to a convex subset $X:=F$ and its boundary $Y:=\del F$. If $\sigma=1_F \mathscr{L}$, and if the target $\tau$ is supported on $\del F$, and if \eqref{abundant} is satisfied with strict inequality, then the active domain $A$ of the unique $b$-optimal semicoupling from $\sigma$ to $\tau$ will not contain $conv(Y)$. Consequently $\eta_{avg}$ will vanish somewhere on $X-A$, and the hypotheses of \ref{A} are violated. In \cite[Ch.4]{martel} a repulsion cost $c|\tau$ is defined which will satisfy (UHS) conditions throughout the non active domains and satisfy hypotheses of \ref{A}. 

\begin{figure}
\centering
\includegraphics[width=0.8\textwidth]{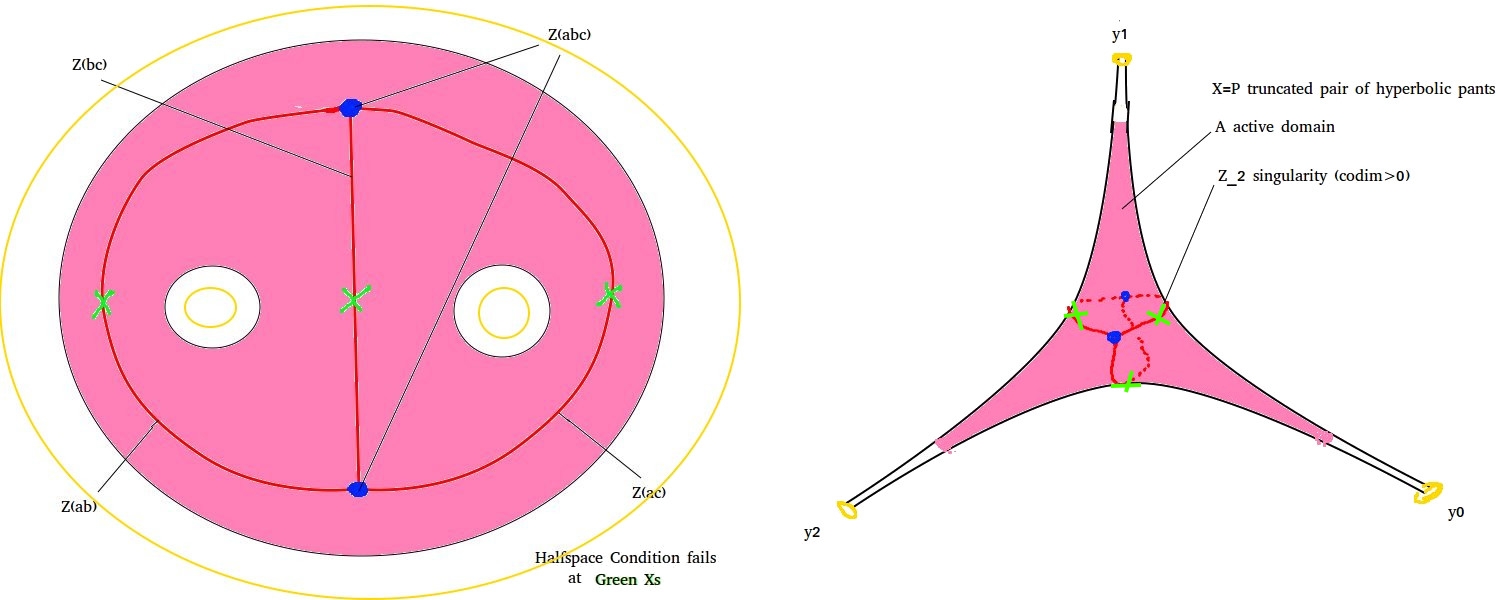}
\caption{Horospherically truncated pair of pants, with active domain relative to a repulsion cost \cite[Ch.4]{martel}. We find (UHS) conditions fail at the green points on $Z_2$. Theorems \ref{A}, \ref{B}, yield homotopy-reductions $P \leadsto Z_1 \leadsto Z_2$, where $P$ is the pair of pants. But $Z_2$ does not deformation retract to $Z_3$}
\end{figure}

\section{Averaged Gradients} 
Suppose we have a Radon measure $\nu$ on the target $Y$, absolutely continuous with respect to $\mathscr{H}_Y$, and we take the $\nu$-average of a $\nu$-family of potentials $f$. The following lemma formally establishes that this averaged-gradient is indeed the gradient field of a continuously differentiable ``averaged" potential. 

\begin{lem}\label{avgpot} 
Let $\beta\geq 2$. Let $\nu_1, \nu_2, \nu_3, \ldots$ be a sequence of empirical probability measures, i.e. renormalized sums of Dirac masses, which converge as $N\to +\infty$ in the weak-$*$ topology to the renormalized probability measure $(\bar{\nu}[Y])^{-1} \cdot \bar{\nu}$ on $Y$. Then:

(i) For $x\in X-A$, the limit \begin{equation}\label{limit}\lim_{N\to +\infty} (1-\beta)^{-1}~ \int_Y (c(x,y) -\psi(y))^{1-\beta} d\nu_N(y)\end{equation} exists and converges to the finite integral \begin{equation}\label{favg} f_{avg}(x):=(1-\beta)^{-1}~\frac{1}{\bar{\nu}[Y]} \int_Y (c(x,y)-\psi(y))^{1-\beta}. d\bar{\nu}(y).\end{equation} 

(ii) The rule $f_{avg}: X-A \to \bR$ defines a continuously differentiable function with gradient $$\nabla_x f_{avg}=(\bar{\nu}[Y])^{-1} \int _Y \nabla_x (c(x,y)-\psi(y))^{-\beta}.d\bar{\nu}(y).$$ 
\end{lem}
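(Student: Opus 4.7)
The plan is to obtain part (i) as an immediate consequence of the definition of weak-$*$ convergence of Radon measures, once the integrand $y \mapsto (c(x,y)-\psi(y))^{1-\beta}$ is shown to be continuous and bounded on $Y$ for each fixed $x \in X-A$; and to obtain part (ii) by differentiating under the integral sign, with the interchange justified by dominated convergence using assumptions (A1)--(A2) on the cost.

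For (i), the crucial technical step is to secure a strictly positive lower bound $\inf_{y \in Y}[c(x,y) - \psi(y)] \geq \delta(x) > 0$ for each $x \in X - A$. By Proposition \ref{lipsi}, $\psi$ is locally Lipschitz and hence continuous on $dom(\psi)$; together with continuity of $c(x,\cdot)$ from (A0), the map $y \mapsto c(x,y) - \psi(y)$ is continuous on $dom(\psi)$ and equal to $+\infty$ elsewhere. Compactness of $Y$ (or the coercivity clause of (A0) in the noncompact case) forces the infimum to be attained or approached at some $y_*$; if that infimum were $\leq 0$, then $\psi(y_*) - c(x,y_*) = \psi^c(x)$ would hold at $y_*$, placing $x \in \del^c\psi(y_*) \subset A$ and contradicting $x \in X - A$. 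Hence the infimum is strictly positive, the integrand is continuous and bounded on $Y$, and since $\bar{\nu}$ is absolutely continuous with respect to $\mathscr{H}_Y$ the boundary of $dom(\psi)$ carries zero $\bar{\nu}$-mass. The Portmanteau theorem applied to the weak-$*$ convergence $\nu_N \to \bar{\nu}/\bar{\nu}[Y]$ then produces the claimed limit, and multiplication by $(1-\beta)^{-1}$ completes part (i).

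For (ii), differentiating the integrand $h(x,y) := (1-\beta)^{-1}(c(x,y) - \psi(y))^{1-\beta}$ in $x$ yields by the chain rule $\nabla_x h(x,y) = (c(x,y) - \psi(y))^{-\beta} \nabla_x c(x,y)$; by (A1) this derivative is continuous in $x$, and by (A2) the magnitude $\|\nabla_x c(x,y)\|$ is upper semicontinuous and hence locally bounded. Since $X - A$ is open and the lower bound on $c(x,y) - \psi(y)$ from the previous paragraph varies lower-semicontinuously in $x$, it extends uniformly over any compact neighborhood $K \subset X - A$ of a given $x_0$, producing a uniform integrable majorant for $\|\nabla_x h(x,y)\|$ on $K \times Y$. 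Dominated convergence then legitimizes interchanging $\nabla_x$ with $\int_Y$ and delivers the claimed gradient formula, while a further dominated-convergence argument, using continuity of $\nabla_x h$ in $x$, gives continuity of $x \mapsto \nabla_x f_{avg}$.

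The main technical obstacle is the uniform positive lower bound on $c(x,y) - \psi(y)$ and its local uniformity in $x$. When $Y$ is compact this follows cleanly as sketched, but in the noncompact case one must combine the coercivity of $c(\cdot, y)$ from (A0) with asymptotic control of $\psi$, together with an appropriate normalization convention on $\psi$ (e.g., shifting so that $\psi^c \leq 0$ on $X-A$, which ensures $c(x,y) - \psi(y) \geq 0$ globally and makes the $(1-\beta)$-th power real-valued). Once these technical points are addressed, the remainder of the argument is the standard weak-$*$ convergence plus differentiation-under-the-integral routine.
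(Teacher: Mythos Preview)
Your proof is correct and supplies considerably more detail than the paper's own argument, which is quite terse: the paper simply asserts that under (A0)--(A4) the approximating integrals converge uniformly on compact subsets of $X-A$, and that uniform convergence of these continuously differentiable approximants yields (ii).

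Your route differs from the paper's in two places. For (i) you argue pointwise in $x$ via the Portmanteau theorem, after isolating the key technical fact that $\inf_{y}[c(x,y)-\psi(y)]>0$ for $x\in X-A$ so that the integrand is a bounded continuous test function; the paper instead packages this into a claim of uniform convergence on compacta. For (ii) you differentiate under the integral sign directly against the limit measure $\bar{\nu}$ via dominated convergence, whereas the paper appeals to uniform convergence of the $C^1$ approximants $\int f_y\,d\nu_N$. Both approaches rest on the same technical core---strict positivity of $c(x,y)-\psi(y)$ on $X-A$, locally uniform in $x$---which you make explicit and the paper leaves implicit. Your method has the advantage of making the assumptions (A1)--(A2) do visible work, and of avoiding the somewhat delicate passage from uniform convergence of $C^1$ functions to $C^1$-ness of the limit (which really requires uniform convergence of the derivatives as well, a point the paper glosses over). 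Your acknowledgment of the noncompact-$Y$ subtlety is appropriate; the paper does not address it either.
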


\begin{proof}
If $c$ satisfies (A0)--(A4), then the limit defining $f_{avg}$ converges uniformly on compact subsets of $X-A$. So the limit \eqref{limit} exists and is finite. Moreover the uniform convergence on compact subsets implies (ii), since the approximants are continously differentiable on $X-A$. Therefore $\nabla_x f_{avg}$ is the average of $\nabla_x (c(x,y)-\psi(y))^{-\beta}$ with respect to $\bar{\nu}$, as desired.  
\end{proof}





For $y\in Y$, $x\in X-A$, and $\beta\geq 2$, we abbreviate 
\begin{equation}\label{fy} f_y(x):=\frac{1}{1-\beta}~(c(x,y)-\psi(y))^{1-\beta} \end{equation} and define \begin{equation}\label{favgdef}
f_{avg}: X-A \to \bR, ~~~f_{avg}(x)=(\nu[Y])^{-1} \int_Y f_y(x).d\nu(y).
\end{equation}

\begin{dfn}[\textbf{Property (C)}] \label{a5spec}
The collection of functions $\{f_y|~y\in Y\}$ satisfies Property (C) throughout $X-A$ with respect to the uniform probability measure $\bar{\nu}:=\frac{1}{\nu[Y]}~\nu$ if there exists constant $C>0$ such that \begin{equation}
||\nabla_x f_{avg}|| \geq C \int_Y ||\nabla_x f_y||.d\bar{\nu}(y) \label{maxineq}
\end{equation}
pointwise throughout $X-A$.
\end{dfn}

When $Y$ is finite, $\#(Y)<+\infty$, the estimate \eqref{maxineq} requires the ratio $$||\nabla_x f_{avg}||/ \max_{y\in Y}||\nabla_x f_y||$$ be uniformly bounded away from zero throughout $X-A$. In this case, Property (C) and (UHS) conditions ensures the divergence of the average $\nabla_x f_{avg}$ whenever a gradient summand $\nabla_x f_y$ diverges. 

When $Y$ is infinite with positive dimension, the pointwise divergence of an integrand $||f_y||\to +\infty$ need not imply the divergence of the average $f_{avg}$ for every choice of $\beta\geq 2$ -- rather the rate at which $f_y$ diverges must be sufficiently large. When cost $c$ satisfies Assumptions (A0)--(A5), Lemma \ref{lipsi} proves that $c$-concave potentials $\psi: Y\to \bR \cup \{-\infty\}$ are locally Lipschitz throughout $dom(\psi)$. This implies $\beta =\dim(Y)+2$ is sufficient. If $\{x_k\}_k$ is a countable sequence in $X-A$, then $f_{avg}(x_k)$ diverges to $+\infty$ if and only if $f_y(x_k)$ diverges for $y$ belonging to some subset $V\subset Y$.

\begin{lem}\label{conc} 
Let $c: X\times Y \to \bR$ be cost satisfying (A0)--(A5), as above, and $A\subset X$ the active domain of a $c$-optimal semicoupling. Fix $y_0\in Y$, and abbreviate $f_0(x):=f_{y_0}(x)=\frac{1}{1-\beta}(c(x,y_0)-\psi(y_0))^{1-\beta}$ for some $\beta\geq 2$, $x\in X$. 
Suppose:

\textbf{(a)} $\nabla_x c(x,y_0)$ is uniformly bounded away from the origin; and

\textbf{(b)}  $\nabla^2_{xx} c(x,y_0)$ is uniformly bounded above with respect to $x\in X-A$.

Then for every $K>0$, there exists $\epsilon>0$ such that $\nabla_{xx}^2 f_0\geq K.Id>0$ in the direction of $-\nabla_x c(x,y_0)$ throughout the $\epsilon$-neighborhood of $\{f_0=+\infty\}$ in $X-A$.

\end{lem}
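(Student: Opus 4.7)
The plan is a routine second-derivative computation; the only genuine input is that the coefficient $-\beta u^{-\beta-1}$, produced by differentiating $u^{-\beta}$, dominates every bounded term as $x$ approaches $\{u=0\}=\{f_0=+\infty\}$. Abbreviate $u(x) := c(x,y_0)-\psi(y_0)$, so that $f_0 = (1-\beta)^{-1}\,u^{1-\beta}$. Two applications of the chain rule give
\begin{equation*}
\nabla_x f_0 \;=\; u^{-\beta}\,\nabla_x c(x,y_0), \qquad \nabla^2_{xx} f_0 \;=\; -\beta\, u^{-\beta-1}\,\nabla_x c \otimes \nabla_x c \;+\; u^{-\beta}\,\nabla^2_{xx} c(x,y_0).
\end{equation*}
The rank-one term has order $|u|^{-\beta-1}$ while the bounded-Hessian term has the strictly lower order $|u|^{-\beta}$, so as $u\to 0$ the former beats the latter by a factor of $|u|^{-1}$. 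This is the entire mechanism behind the lemma.

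Next I would evaluate $\nabla^2_{xx} f_0$ against the unit vector $\hat v := -\nabla_x c(x,y_0)/\|\nabla_x c(x,y_0)\|$, which is well defined by hypothesis (a). This gives
\begin{equation*}
\langle \nabla^2_{xx} f_0\,\hat v,\hat v\rangle \;=\; -\beta\, u^{-\beta-1}\,\|\nabla_x c(x,y_0)\|^2 \;+\; u^{-\beta}\,\langle \nabla^2_{xx} c(x,y_0)\,\hat v,\hat v\rangle.
\end{equation*}
Hypothesis (a) supplies a constant $c_1>0$ with $\|\nabla_x c(x,y_0)\|\geq c_1$ throughout $X-A$, while hypothesis (b) supplies a constant $M>0$ with $\|\nabla^2_{xx} c(x,y_0)\|\leq M$. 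With the sign convention of \eqref{fy} --- on the side of $\{u=0\}$ where $f_0\to+\infty$, the scalar $-\beta u^{-\beta-1}$ is positive --- one obtains the lower bound
\begin{equation*}
\langle \nabla^2_{xx} f_0\,\hat v,\hat v\rangle \;\geq\; \beta\, c_1^2\,|u|^{-\beta-1} \;-\; M\,|u|^{-\beta}.
\end{equation*}

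Finally, given $K>0$, I would choose $\epsilon\in(0,1)$ small enough that the lower-order term cannot defeat the leading one; for instance, take $\epsilon := \min\!\bigl(\beta c_1^2/(2M),\;(\beta c_1^2/(2K))^{1/(\beta+1)}\bigr)$, so that whenever $|u|<\epsilon$ both $\tfrac{1}{2}\beta c_1^2 |u|^{-\beta-1}\geq M|u|^{-\beta}$ and $\tfrac{1}{2}\beta c_1^2 |u|^{-\beta-1}\geq K$ hold. On the $\epsilon$-neighborhood of $\{f_0=+\infty\}$ this yields $\langle \nabla^2_{xx} f_0\,\hat v,\hat v\rangle \geq K$, which is precisely the claimed bound in the direction of $-\nabla_x c(x,y_0)$. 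The main obstacle I anticipate is the sign bookkeeping: one has to check that the side of the singular locus on which $f_0$ diverges to $+\infty$ is exactly the side on which the coefficient $-\beta u^{-\beta-1}$ is positive, i.e.\ that $f_0$ is convex, rather than concave, in the radial direction near $\{u=0\}$. Once this is verified, the rest of the argument is a routine comparison of powers of $|u|$ together with the pointwise estimates (a) and (b).
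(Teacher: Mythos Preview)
Your proposal is correct and follows essentially the same route as the paper: both compute $\nabla^2_{xx} f_0 = -\beta u^{-\beta-1}\nabla_x c\otimes\nabla_x c + u^{-\beta}\nabla^2_{xx}c$, observe that the rank-one term dominates the bounded Hessian term as $u\to 0$, and then extract an $\epsilon$ (you do this with an explicit formula, the paper invokes compactness of the sublevels). Your flagged concern about sign bookkeeping is exactly the point the paper marks with its terse ``N.B.\ the change of sign,'' so you have identified the one genuine subtlety.
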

\begin{proof}
The function $f_0$ is well-defined on $\{c(x,y_0)>\psi(y_0)\} \subset X$. If $x$ converges to $x_\infty \in \{c(x,y_0) \leq \psi(y_0)\}$, then both $f_0$, $\nabla_x f_0$ diverge to infinity.  We find $\nabla^2_{xx} f_0$ is equal to $$-\beta (c(x,y_0)-\psi(y_0))^{-1-\beta} \nabla_x c(x,y_0) \otimes \nabla_x c(x,y_0)+(c(x,y_0)-\psi(y_0))^{-\beta} \nabla^2_{xx} c(x,y_0).$$ By Proposition \ref{a5spec} the gradients $\nabla_x c(x,y_0)$ are uniformly bounded away from zero in neighborhoods of $\{f_0=+\infty\}$. Assumption (A2) implies $\nabla^2_{xx} c(x,y_0)$ is uniformly bounded above on superlevel sets $\{f_0 \geq T\}$ for all $T>0$. Factoring out the term $(c(x,y_0)-\psi(y_0))^{-\beta}$, we find $\nabla_{xx}^2 f_0$ is positively proportional to \begin{equation}\label{concest} -\beta~(c(x,y_0)-\psi(y_0))^{-1}~\nabla_x c(x,y_0) \otimes \nabla_x c(x,y_0) + \nabla_{xx}^2 c(x,y_0).\end{equation} We observe \eqref{concest} diverges to $-\infty$ when $(c(x,y_0)-\psi(y_0))^{-1}$ diverges to $+\infty$. This implies $\nabla^2_{xx} f_0$ is negative semidefinite when $c(x,y_0)-\psi(y_0)>0$ is sufficiently small, and strongly convex $$\nabla_{xx}^2 f_0 \geq K >0$$ in the direction of $-\nabla_x c(x,y_0)$. N.B. the change of sign. 

By Assumption (A1) the sublevels $\{x ~|~\psi(y_0)\leq c(x,y_0) \leq \psi(y_0)+\epsilon'\}$ are compact subsets of $X-A$ for every $\epsilon'>0$. This implies a sufficiently small $\epsilon>0$ exists for which $\nabla^2_{xx} f_0\geq K.Id>0$ throughout the $\epsilon$-neighborhood of $\{f_0=+\infty\}$ in the direction $-\nabla_x c(x,y_0)$. 
\end{proof}


\begin{exx} 
To illustrate Lemma \ref{conc}, consider the $f(x)=x^{-\beta}$ for $x\geq 0$, $\beta>0$. The gradient flow $x'= (-\beta)x^{-1-\beta}$ is bounded away from zero in neighborhoods of the pole at $x=0$, and indeed diverges to $+\infty$. Moreover $f''(x)$ is obviously bounded away zero and diverging to $+\infty$ as $x\to 0^+$. 

For initial condition $x_0>0$, the integral curve of the negative gradient flow is equal to $x(s)=(x_0^{2+\beta} - \beta(\beta+2)s)^{1/(2+\beta)}$, which converges in finite time to the pole at $x=0$ over the interval $0\leq s\leq \frac{1}{\beta(\beta+2)}x_0^{2+\beta}$. Thus we find $\omega(x_0)=\frac{1}{\beta(\beta+2)}x_0^{2+\beta}$ varies continuously with respect to $x_0>0$, and is even Lipschitz. Compare Lemma \ref{maxinterval} below. 

\end{exx}

\section{Finite-Time Blow-Up}

The blow-up in finite time is typical property of the gradient flow defined by the potentials $f_0$. Actually our applications require verifying these same properties for the averaged potential $f_{avg}$ and its gradient $\nabla_x f_{avg}$. 

\begin{lem}[Asymptotic Convexity of Average Gradient Flow]\label{conc1} 
Let $f_{avg}$ be the average defined in Lemma \ref{avgpot}, equation \eqref{favg}, with exponent $\beta=\dim(Y)+2$. If the distance from $x\in X-A$ to the boundary $\del A$ is sufficiently small, then $f_{avg}$ is strongly convex in the direction of $\nabla_x f_{avg}$.
\end{lem}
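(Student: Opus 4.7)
My plan is to promote the single-$y$ Hessian estimate of Lemma \ref{conc} to its $\bar{\nu}$-average. Write $g_y(x):=c(x,y)-\psi(y)>0$ on $X-A$. By Lemma \ref{avgpot} one may differentiate twice under the integral to get
\[
\nabla^2_{xx}f_{avg}(x) \;=\; \frac{1}{\bar{\nu}[Y]}\int_Y\Bigl[-\beta\, g_y^{-\beta-1}\nabla_x c\otimes \nabla_x c \;+\; g_y^{-\beta}\nabla^2_{xx}c\Bigr]\,d\bar{\nu}(y),
\]
and setting $v:=\nabla_x f_{avg}(x)$ the quadratic form along the gradient direction is
\[
\nabla^2 f_{avg}(v,v) \;=\; \frac{1}{\bar{\nu}[Y]}\int_Y\Bigl[-\beta\, g_y^{-\beta-1}(\nabla_x c\cdot v)^2 \;+\; g_y^{-\beta}\,\nabla^2_{xx} c(v,v)\Bigr]\,d\bar{\nu}(y).
\]
The rest of the argument is a comparison of the two integrands as $\mathrm{dist}(x,\del A)\to 0$.

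Next I would decompose $Y=V_\epsilon(x)\cup(Y\setminus V_\epsilon(x))$, where $V_\epsilon(x):=\{y:g_y(x)<\epsilon\}$ is the near-singular slice. Off $V_\epsilon$, Assumptions (A1)--(A2) bound both $\nabla_x c$ and $\nabla^2_{xx}c$ locally uniformly, so the complementary integral is a harmless bounded remainder. On $V_\epsilon$ the rank-one integrand carries the extra factor $g_y^{-1}$ relative to the $\nabla^2_{xx}c$-integrand, whose coefficient is again bounded by (A1); thus the rank-one term dominates exactly as in the pointwise case of Lemma \ref{conc}. Hence the sign and order of growth of $\nabla^2 f_{avg}(v,v)$ are controlled by the consistently-signed integral $-\beta\int_{V_\epsilon}g_y^{-\beta-1}(\nabla_x c\cdot v)^2\,d\bar{\nu}(y)$, transferring the pointwise conclusion of Lemma \ref{conc} to the $\bar{\nu}$-average with the same sign convention.

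The critical step is a lower bound on the absolute value of the dominant integral by a constant multiple of $\|v\|^2/\mathrm{dist}(x,\del A)$. This needs (a) an alignment bound $(\nabla_x c(x,y)\cdot v)^2\gtrsim \|\nabla_x c\|^2\|v\|^2$ on a $\bar{\nu}$-positive subset of $V_\epsilon(x)$, and (b) the integral $\int_{V_\epsilon}g_y^{-\beta-1}\,d\bar{\nu}$ to be of order $\epsilon^{-1}$. For (a), since $v$ is itself the average $\int g_{y'}^{-\beta}\nabla_x c(x,y')\,d\bar{\nu}(y')$, Property (C) of Definition \ref{a5spec}, equivalently (A6), says the summands do not cancel, forcing $v$ to be aligned with the dominant $\nabla_x c(x,y)$ for $y\in V_\epsilon$ rather than nearly orthogonal to them. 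For (b), the choice $\beta=\dim(Y)+2$ is calibrated against the $(\dim Y)$-dimensional Hausdorff measure $\bar{\nu}$ together with the local Lipschitz bound on $\psi$ (Proposition \ref{lipsi}) and the finite-order Definition \ref{order}, which bounds $g_y$ from below by a positive power of $d_Y(y,y_*)$ for the nearest minimizer $y_*\in\del^c\phi(x)$; this is precisely the integrability threshold at which the singular integral diverges at the required rate $\epsilon^{-1}$.

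The main obstacle is step (a). Without the non-cancellation content of (UHS)/Property (C), different $\nabla_x c(x,y)$ with $y\in V_\epsilon(x)$ could cancel into a small $v$ that happens to be nearly orthogonal to each pointwise $\nabla_x c(x,y)$, breaking the rank-one alignment needed for the quadratic form to grow. Once alignment is secured, the remainder is a routine integrability check driven by the exponent $\beta=\dim(Y)+2$, and Lemma \ref{conc1} follows as the averaged counterpart of Lemma \ref{conc}.
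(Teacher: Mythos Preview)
Your proposal is correct and follows essentially the same route as the paper: both express $\nabla^2_{xx} f_{avg}$ as an average, isolate the dominant rank-one contribution near the pole set, and invoke (UHS)/Property~(C) to rule out orthogonality between $v=\nabla_x f_{avg}$ and the diverging integrands $\nabla_x f_y$ (the paper phrases this via a sequence $x_k\to\del A$ rather than your explicit $V_\epsilon$-decomposition, but the content is the same). One minor remark: Definition~\ref{order} and the weight $\nu_x$ belong to the $Z_j$-setting of Theorem~\ref{B}, not to $X-A$, so in your step~(b) Proposition~\ref{lipsi} alone already supplies the needed local control on $g_y$ near its minimizer.
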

\begin{proof}
Let ${x_k}$ be a sequence in $X-A$ converging to a point $x_\infty\in \{f_{avg}=+\infty\}$. The choice of $\beta$ says $f_{avg}$ diverges if and only integrands $f_y$ diverge, and there exists a subset $V \subset Y$ such that $f_y$ diverges to $+\infty$ for every $y\in V$. The divergence of $f_y, y\in V$ also implies the divergence of the gradients $\nabla_x f_y$ and Hessians $D_{xx}^2f_y$, (see \eqref{concest} in proof of \ref{conc}). Moreover the Hessians $D^2 f_{avg}|_{x_k}$ are positive semidefinite when $k$ is sufficiently large, being the asymptotic to the average rank-one quadratic forms $\langle \nabla_x f_y , - \rangle^2$. So $D^2f_{avg}[\nabla_x f_{avg}]$ is asymptotic to the average of \begin{equation} \label{innprod}
\langle \nabla_x f_y, \nabla_x f_{avg} \rangle^2
\end{equation} for $y\in V$. Now we claim $$\lim_{k\to +\infty} \langle \nabla_x f_y|_{x_k}, \nabla_x f_{avg}|_{x_k} \rangle^2=+\infty, \text{~unless~} \nabla_x f_y,  \nabla_x f_{avg} \text{~are orthogonal}.$$ The (UHS) conditions imply $\nabla_x f_{avg}$ is uniformly bounded away from zero, and therefore the inner products \eqref{innprod} are not identically zero for all $y\in V$. This implies the divergence of $D^2f_{avg}[\nabla_x f_{avg}]|_{x_k}$ as $k\to +\infty$. 

\end{proof}

\begin{lem}[Finite-time Blow-up] \label{ftbu}
Suppose the functions $\{f_y~|~ y\in Y\}$ satisfy \eqref{maxineq} as above. Then for every initial value $x_0\in dom(f_{avg})$, the gradient flow defined by the average gradient $x'=\nabla_x f_{avg}$ diverges to infinity in finite time.
\end{lem}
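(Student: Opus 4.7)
The plan is a classical finite-time blow-up argument via a differential inequality of the form $\dot g \geq C\,g^{\alpha}$ with exponent $\alpha > 1$; the two key inputs will be Property~(C) \eqref{maxineq} and a pointwise estimate relating $\|\nabla_x f_y\|$ to $|f_y|$ coming directly from the algebraic form \eqref{fy}.

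First, from \eqref{fy} one computes $\nabla_x f_y = (c-\psi)^{-\beta}\nabla_x c$ while $|f_y| = \tfrac{1}{\beta-1}(c-\psi)^{-(\beta-1)}$; eliminating $(c-\psi)$ between these two identities yields $\|\nabla_x f_y\| = (\beta-1)^{\beta/(\beta-1)}\,\|\nabla_x c\|\cdot |f_y|^{\beta/(\beta-1)}$. Under assumption~(A2) and the UHS/(A6) hypothesis, $\|\nabla_x c\|$ is uniformly bounded below by some $c_1 > 0$ along the forward orbit, so $\|\nabla_x f_y\| \geq c_0\,|f_y|^{\beta/(\beta-1)}$ pointwise. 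Integrating against the probability measure $\bar\nu$ and applying Jensen's inequality for the strictly convex $\phi(t) = t^{\beta/(\beta-1)}$ (valid since $\beta/(\beta-1) > 1$) gives
\[
\int_Y |f_y|^{\beta/(\beta-1)}\,d\bar\nu \;\geq\; \Bigl(\int_Y |f_y|\,d\bar\nu\Bigr)^{\beta/(\beta-1)} = |f_{avg}|^{\beta/(\beta-1)},
\]
where the final equality uses the constant (negative) sign of $f_y$ coming from $\tfrac{1}{1-\beta} < 0$. Combining this with Property~(C) produces the desired functional estimate $\|\nabla_x f_{avg}\| \geq C'\,|f_{avg}|^{\beta/(\beta-1)}$.

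Next, set $g(t) := |f_{avg}(x(t))|$ along the flow, suitably oriented so that $g$ is monotonically increasing (as in the model example following Lemma~\ref{conc}). Then $\dot g(t) = \|\nabla_x f_{avg}(x(t))\|^2 \geq (C')^2\, g(t)^{\alpha}$ with $\alpha := 2\beta/(\beta-1) > 2$. Since $\alpha > 1$, separation of variables yields $g(t)^{-(\alpha-1)} \leq g(0)^{-(\alpha-1)} - (\alpha-1)(C')^2\,t$, whose right-hand side vanishes at the finite time $T^* = g(0)^{-(\alpha-1)}/\bigl((\alpha-1)(C')^2\bigr)$. At this moment $g\to +\infty$, proving the claimed blow-up.

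The principal technical obstacle is establishing the uniform lower bound $\|\nabla_x c\|\geq c_1 > 0$ along the entire forward orbit --- not merely asymptotically near $\partial A$; without this the differential inequality could degenerate to $\dot g \geq C g$, giving only exponential growth (which is the order of estimate already supplied by Lemma~\ref{conc1}) rather than true finite-time blow-up. This uniform lower bound is precisely the role of Property~(C)/UHS~(A6): it is what promotes the exponent $\alpha$ past $1$. A minor additional verification that the flow remains in $\mathrm{dom}(f_{avg}) = X-A$ up to $T^*$ follows from coercivity~(A0) together with the monotonicity of $g$, ruling out escape by any route other than hitting the singular locus of $f_{avg}$.
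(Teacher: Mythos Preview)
Your route via the differential inequality $\dot g \geq C' g^\alpha$ with $\alpha = 2\beta/(\beta-1) > 2$ is genuinely different from the paper's, which instead combines the asymptotic convexity of $f_{avg}$ near the poles (Lemma~\ref{conc1}) with the lower bound on $\|\nabla_x f_{avg}\|$ to argue, rather informally, that a discretized flow reaches $\partial A$ in finitely many definite-size steps. Your approach is more quantitative and, when it applies, yields an explicit blow-up time that the paper's sketch does not.

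There is, however, a real gap in how you discharge the ``principal technical obstacle'' $\|\nabla_x c(x,y)\| \geq c_1 > 0$. Neither (A2) nor Property~(C)/UHS~(A6) supplies it: (A2) is upper semicontinuity of $\|\nabla_x c\|$, the wrong direction; Property~(C) \eqref{maxineq} is a \emph{comparison} $\|\nabla_x f_{avg}\| \geq C \int \|\nabla_x f_y\|\,d\bar\nu$ that is vacuous when the $\nabla_x c(x,y)$ all vanish; and (A6)/UHS bounds $\eta_{avg} = \nabla_x f_{avg}$ away from zero, not $\nabla_x c$. The lower bound you need is a \emph{separate} hypothesis---exactly assumption~(a) of Lemma~\ref{conc}---and without it your chain $\int \|\nabla_x f_y\| \geq c_0 \int |f_y|^{\beta/(\beta-1)}$ fails outright rather than merely degrading to a linear $\dot g \geq C g$ as you suggest. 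The paper implicitly uses this same hypothesis via Lemmas~\ref{conc}--\ref{conc1}, so you are not asking for more than the paper does, but you should recognise it as an independent assumption rather than a consequence of Property~(C). A related wart: since $f_{avg} < 0$ on $X-A$, the stated flow $x' = +\nabla_x f_{avg}$ \emph{increases} $f_{avg}$ and hence \emph{decreases} $g = |f_{avg}|$; your ``suitably oriented'' conceals a sign flip to $x' = -\nabla_x f_{avg}$ that the paper itself also glosses over.
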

\begin{proof}
The estimate \eqref{maxineq} shows the gradient $\nabla_x f_{avg}$ is uniformly bounded away from zero in the neighborhoods of the poles $\{f_{avg}=+\infty\}$ in $X-A$. Moreover $f_{avg}$ is asymptotically convex in neighborhoods of the poles using Lemma \ref{conc1} along directions of $\nabla_x f_{avg}$. This implies the gradient flow $x'=\nabla_x f_{avg}$, $x(0)=x_0$ blows-up in finite-time for every initial value $x_0\in X-A$. 
\end{proof}

Informally the estimate \eqref{maxineq} implies every step in the discretized gradient flow (e.g., Euler scheme) has a definite size. Meanwhile the asymptotic convexity of Lemma \ref{conc} implies the discretized gradient flow well approximates the continuous gradient flow. But if step-sizes have a definite magnitude, then we definitely approach the poles after a finite number of steps and the integral curves blow-up in finite time.

\section{Continuity of $\omega$}
The blow-up in finite time (Lemma \ref{ftbu}) implies the maximal forward-time interval of existence for the gradient flow is a bounded interval $I(x_0):=[0, \omega(x_0)) \subset [0, +\infty)$. For general ordinary differential equations, it's known that $\omega(x_0)$ is lower semicontinuous as a function of $x_0$: for a sequence of initial values $x_0, x_1, \ldots$ converging to some $x_\infty$, we have $\omega(x_\infty) \leq \liminf_{k\to +\infty} \omega(x_k)$. See \cite[Theorem 2.1, pp.94]{Hart}. In our particular setting, it is further necessary to establish the continuity of this maximal interval of existence. This is established in Lemma \ref{maxinterval} below.

\begin{lem}[Continuity of Max Interval of Existence]\label{maxinterval}
Let $c$ satisfy Assumptions (A0)--(A5). Then the maximal intervals of existence $I(x_0)=[0, \omega(x_0))$ of solutions to the initial value problem \ref{ode} vary continuously with respect to the initial point $x_0$. In otherwords, $x_0\mapsto \omega(x_0)$ varies continuously with $x\in X-A$.
\end{lem}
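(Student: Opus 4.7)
The plan is to combine classical continuous dependence of ODE solutions with a quantitative control of the remaining blow-up time near the pole set $\{f_{avg}=+\infty\}\subset X-A$. The lower semicontinuity $\omega(x_\infty)\leq \liminf_{k}\omega(x_k)$ is classical and is already cited from Hartman's ODE monograph, so the content of \ref{maxinterval} reduces to showing the reverse inequality $\limsup_{k}\omega(x_k)\leq \omega(x_\infty)$, i.e., the upper semicontinuity of $\omega$.

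The key quantitative tool will be an estimate of the form
\[
\omega(x_0)-t \;\leq\; C\cdot f_{avg}(x(t;x_0))^{-\alpha}
\]
for some exponent $\alpha>0$ and a constant $C>0$ independent of the initial condition, valid whenever $x(t;x_0)$ lies in a sufficiently small neighborhood of the poles. To derive this, I would exploit Lemma \ref{conc1}: the strong convexity of $f_{avg}$ in the direction of $\nabla_x f_{avg}$ near the pole set, together with the uniform lower bound $\|\nabla_x f_{avg}\|\geq C>0$ provided by Property \textbf{(C)} / (UHS), implies along the flow that $\tfrac{d}{dt}f_{avg}(x(t))=\|\nabla_x f_{avg}\|^2$ grows like a positive power of $f_{avg}$ itself. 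Modelling on the one-dimensional prototype $f(u)=\tfrac{1}{1-\beta}u^{1-\beta}$ from the example following Lemma \ref{conc}, one obtains $\|\nabla_x f_{avg}\|^2\gtrsim f_{avg}^{p}$ with exponent $p>1$ determined by $\beta=\dim(Y)+2$. Integrating the scalar ODE $\dot F\geq cF^{p}$ yields $F(t)^{1-p}\leq F(0)^{1-p}-(p-1)ct$, hence $\omega-t\leq \frac{1}{(p-1)c}F(t)^{1-p}$, which decays to $0$ as $F(t)\to+\infty$.

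Given this estimate, the upper semicontinuity argument runs as follows. Fix $x_\infty\in X-A$ and set $T:=\omega(x_\infty)<+\infty$, finiteness being guaranteed by Lemma \ref{ftbu}. For arbitrary $\varepsilon>0$, choose $t^*\in(T-\varepsilon/2,\,T)$; by definition of $T$, the flow $x(t;x_\infty)$ is defined on $[0,t^*]$ and $f_{avg}(x(t^*;x_\infty))$ is as large as we please, in particular large enough that the quantitative bound forces the remaining blow-up time starting at $x(t^*;x_\infty)$ to be $<\varepsilon/2$. Classical continuous dependence of ODE solutions on initial data (valid on any compact subinterval of the maximal interval) then gives a neighborhood $U$ of $x_\infty$ such that for every $x_0\in U$ the flow $x(\cdot;x_0)$ exists on $[0,t^*]$ and $x(t^*;x_0)$ is close to $x(t^*;x_\infty)$; by continuity of $f_{avg}$, the value $f_{avg}(x(t^*;x_0))$ is likewise large. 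Applying the quantitative estimate to the new initial condition $x(t^*;x_0)$ yields $\omega(x_0)\leq t^*+\varepsilon/2<T+\varepsilon$, establishing upper semicontinuity at $x_\infty$.

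The main obstacle is making the quantitative blow-up bound truly uniform on a neighborhood of the entire pole set, not just at an isolated pole of a single integrand $f_y$. Because $f_{avg}$ is a $\bar\nu$-average, the locus $\{f_{avg}=+\infty\}$ need not coincide with a single $\{f_y=+\infty\}$; as noted after Definition \ref{a5spec}, it is controlled through the choice $\beta=\dim(Y)+2$ by the subset $V\subset Y$ of directions whose integrands diverge. The delicate technical point is therefore to upgrade the pointwise asymptotic convexity of Lemma \ref{conc1} into a neighborhood-uniform estimate $\|\nabla_x f_{avg}\|^2\geq c\, f_{avg}^{p}$, with $c>0$ independent of the particular approach to the pole set. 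Once this uniformity is extracted from Assumptions (A0)--(A5) and the (UHS) hypothesis, the continuity of $\omega$ is an essentially formal consequence of the argument above.
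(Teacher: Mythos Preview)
Your argument is sound and your identification of the key technical difficulty (uniformity of the quantitative blow-up estimate over a neighborhood of the entire pole set $\partial A$) is exactly right. However, your route differs from the paper's.

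The paper does not split into lower/upper semicontinuity and does not derive a scalar inequality $\dot F\geq cF^{p}$ for the remaining time. Instead it argues in one stroke: the asymptotic convexity of $f_{avg}$ in the direction of $\nabla_x f_{avg}$ (Lemma~\ref{conc1}) forces the gradient flow to be \emph{asymptotically contracting} near $\partial A$, and from this contraction the paper reads off directly that $\omega$ is locally Lipschitz in $x_0$, hence continuous. So the paper's mechanism is geometric (nearby trajectories are squeezed together as they approach the pole set, hence their blow-up times cannot differ much), while yours is analytic (control the residual time by the current value of the potential, then transport that control via classical continuous dependence on compact time intervals).

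Your approach is more explicit and arguably more robust: it isolates a concrete estimate $\omega(x_0)-t\leq C\,f_{avg}(x(t;x_0))^{1-p}$ that can be checked independently, and it makes transparent why finiteness of $\omega$ (Lemma~\ref{ftbu}) and the choice $\beta=\dim(Y)+2$ enter. The paper's approach is terser and yields the slightly stronger Lipschitz conclusion, but the passage ``contracting flow $\Rightarrow$ Lipschitz $\omega$'' is left to the reader; in fact, fleshing out that implication essentially requires a bound of the same flavor as the one you propose. Both arguments rest on the same asymptotic convexity input from Lemma~\ref{conc1}.
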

\begin{proof}
Our assumptions imply the domain $X-A$ is a complete open set. Furthermore $\eta_{avg}(x)$ being uniformly bounded away from zero implies the trajectories $s\mapsto \Psi(x_0, s)$ are finite on compact subsets of $X-A$. Therefore $\omega(x_0)$ is characterized by the two limits $$\lim_{s\to \omega(x_0)^-} \Psi(x_0, s) \in \del A ,~~~ \lim_{s\to \omega(x_0)^-} ||\Psi(x_0, s)||=+\infty,$$ and here we take advantage of the divergence $||\eta_{avg}(x)||\to +\infty$ as $x\to \del A$. 

Moreover the asymptotic concavity (Proposition \ref{conc1}) of the average potential defining $\eta_{avg}(x)$ implies the flow defined by \ref{ode} is asymptotically contracting. This implies $\omega(x_0)$ is actually a Lipschitz function of $x_0$, i.e. satisfying $|\omega(x_1)-\omega(x_0)|\leq C.||x_1-x_0||$ for some constant $C=C(x_0)$ depending on $x_0\in X-A$. Hence $\omega$ is a continuous function, as desired.  
\end{proof}



\section{Proof of Theorem \ref{A}}

\begin{proof}
[Proof of Theorem \ref{A}] 
The active domain $A$ can be expressed as $\cup_{y \in Y} \{x~|~ c(x,y) \leq \psi(y)\}$ for the Kantorovich potential $\psi: Y \to \bR$. If (UHS) condition is satisfied at $x\in X - A$, then the $\nu$-average $\eta_{avg}(x)=\nabla_x f_{avg}(x)$ gradients $\{\nabla_x c(x,y) ~|~ y \in Y\}$ of the gradients is nonzero and uniformly bounded away from zero. We divide the argument into two cases. 

(Case I) Assume $Y$ is finite with $N=\#(Y) <+\infty$.
Define \begin{equation}
\eta_{avg}(x):=N^{-1} \sum_{y \in Y}[(c(x,y)-\psi(y))^{-\beta}\cdot\nabla_x c(x,y)].  \label{avgI}
\end{equation}  Evidently when $Y$ is finite, the sum \eqref{avgI} is finite vector. Then $x\mapsto \eta_{avg}(x)$ is a well-defined nonvanishing vector field on $X-A$ which diverges whenever a denominator converges $c(x,y) \to \psi(y)^+$. Thus $\eta(x, avg)$ is finite if and only if $x\in X-A$. 

We propose integrating the vector field $x\mapsto \eta_{avg}(x)$ throughout $X-A$ to obtain the desired retraction. For every initial point $x_0 \in X-A$, there exists a unique solution $\Psi(x_0, s)$ to the ordinary differential equation 
\begin{equation} 
\frac{d}{ds}|_{(x_0,s)} \Psi=\eta_{avg}(\Psi(x_0,s)), ~~~ \Psi(x_0, 0)=x_0,   \label{ode}
\end{equation} and defined over a maximal interval of existence $I(x_0)=[0, \omega(x_0))$. According to Lemma \ref{maxinterval} the maximal interval $I(x_0)$ varies continuously with respect to $x_0 \in X-A$. Moreover orbits $\{\Psi(x_0,s)~|~ s\in I(x_0)\}$ converge in finite-time to the boundary $\del(X-A)=\del A$ for every initial value $x_0\in X-A$. Lemma \ref{maxinterval} also proves the orbits can be continuously reparameterized to obtain a continuous mapping $\Psi': (X-A) \times [0,1] \to X$ defined by \begin{equation}\label{reparam} \Psi'(x_0,s)=\Psi(x_0, s \omega(x_0)).\end{equation}






(Case II) Suppose $Y$ is infinite set, with uniform measure $\mathscr{H}_Y$. We define $\eta_{avg}(x)$ according to the vector-valued Bochner integral \begin{equation}
\eta_{avg}(x):=(\int_{dom(c_{x})} d\mathscr{H}_Y)^{-1} \int_{dom(c_{x})} (c(x,y)-\psi(y))^{-\beta}\cdot\nabla_x c(x,y) \nu^{~\beta}_x (y)d\mathscr{H}_Y(y),  \label{avgII} \end{equation} where $dom(c_x)$ is closed compact subset of $Y$ for every $x\in X$ by Assumption (A0). Assumption (A5) implies the exponent $\beta=\dim(Y)+2$ is suitable according to Proposition \ref{lipsi}. Lemma \ref{avgpot} implies the vector field $\eta_{avg}(x)=\nabla_x f_{avg}$ is the gradient of a continuously differentiable potential $f_{avg}$ defined on $X-A$.

The proof proceeds as in (Case I). The vector field $x\mapsto \eta_{avg}(x)$ is well-defined nonvanishing vector field on $X-A$ which diverges to $+\infty$ whenever some denominator converges $c(x,y) \to \psi(y)^+$. We integrate the gradient fields and obtain the retraction of $X-A$ onto the poles $\del A$. The flow converges in finite-time by Lemma \ref{ftbu}. We reparameterize the flow according to equation \eqref{reparam}, and obtain a continuous deformation using Lemma \ref{maxinterval}.
\end{proof}

\section{Kantorovich's Contravariant Singularity Functor}\label{ks-1}

The singularity functor $Z: 2^Y \to 2^X$ is defined relative to a cost $c$ on $X \times Y$ satisfying Assumptions (A0)--(A4). Let $\sigma, \tau$ be absolutely continuous with respect to the Hausdorff measures $\mathscr{H}_X$, $\mathscr{H}_Y$ on $X, Y$ respectively. By Monge-Kantorovich duality \ref{maxI} there exists unique $c$-minimizing measures and $c$-concave potentials $\psi^{cc}=\psi$ on $Y$. The $c$-subdifferential $\del^c \psi$ is uniquely determined.

\begin{dfn}[Kantorovich Singularity]
\label{ks} 
Let $\psi: Y\to \bR \cup \{-\infty\}$ be a $c$-concave potential on $Y$. The Kantorovich functor $Z: 2^Y \to 2^X$ is defined by the rule 
$$Y_I\mapsto Z(Y_I):=\cap_{y\in Y_I} \ysub.$$ We declare $Z(\emptyset_Y):=X$ for the empty subset $\emptyset_Y$ of $Y$.
\end{dfn}

The definition $Z(Y_I)=\cap_{Y_I} Z(y)$ yields a \textit{contravariant functor} $Z:2^Y \to 2^X$ where the inclusions $Y_I \hookrightarrow Y_J$ correspond to reverse inclusions $Z(Y_I) \hookleftarrow Z(Y_J)$ in $X$. The contravariant functor $Z$ is uniquely prescribed by the choice of $(c, \sigma, \tau)$ whenever $c$ satisfies (A0)--(A4), and $\sigma, \tau$ are absolutely continuous as defined above. For $(c, \sigma, \tau)$ satisfying the assumptions above, the singularity $Z=Z(c, \sigma, \tau)$ will generally admit many closed subsets $Y_I \subset Y$ for which the cells $Z(Y_I)\subset X$ are empty $Z(Y_I)=\emptyset_X$. It is useful to restrict ourselves to the nontrivial image of $Z$ and formally define the support. 

\begin{dfn}
The support of the contravariant functor $Z: 2^Y \to 2^X$ is the subcategory of $2^Y$, denoted $spt(Z)$, whose objects are the closed subsets $Y_I$ of $Y$ for which $Z(Y_I)$ is nonempty subset of $X$. 
\end{dfn}
So $spt(Z)=\{Y_I \subset Y|~ Z(Y_I) \neq \emptyset_X\}.$ 
Note that $\emptyset_Y \subset Y$ is object in subcategory $spt(Z)$, since $Z(\emptyset_Y)=X$ according to Definition \ref{ks}. 


For given $x\in X$, we interpret $Z'(x):=Z(\del^c \psi^c (x))$ as a ``cellular neighborhood" of $x$ in the active domain $A=:Z_1$. This cellular neighborhood is not necessarily an open subset of the source $X$. The $c$-concavity $\psi^{cc}=\psi$ provides explicit equations describing $Z'(x)$. See Section \ref{locequations} and \eqref{dceq}.

\section{Local Topology and Local Dimensions of $Z$}\label{locequations}
The present section examines the local differential topology and dimensions of Kantorovich's contravariant functor $Z: 2^Y \to 2^X$, where the cost $c$ satisfies Assumptions (A0)--(A5). If $\psi:Y\to \bR \cup \{-\infty\}$ is a $c$-concave potential, then we first seek a description of the differential topology of the cells $$Z'(x):=Z(\del^c \psi^c(x)).$$ The main results are \ref{dim-est} and \ref{sing-dim}. The proof of \ref{dim-est} is essentially a local adaptation of the recent work of \cite{KitMc}, and \ref{sing-dim} is a corollary to a theorem of Alberti \cite{Alberti}. 

Let the reader recall the definitions of $c$-concavity \ref{c-concavity} and $c$-subdifferentials \ref{subdif}. The $c$-concavity $\psi^{cc}=\psi$ represents a pointwise inequality on $Y$, namely \begin{equation}\label{flineq}-\psi^c(x)+\psi(y) \leq c(x,y) \text{~~for all~} (x,y)\in X\times Y.\end{equation} The case of equality $-\psi^c(x)+\psi(y) = c(x,y)$ is most important, and occurs if and only if $y\in \del^c \psi^c (x)$, and if and only if $x \in \del^c \psi(y)$. We recall that $c$-optimal semicouplings $\pi$ are supported on the graphs of $c$-subdifferentials,  hence the equality $-\psi^c(x)+\psi(y)=c(x,y)$ holds $\pi$-a.e., when $\psi$ is a $c$-concave maximizer to Kantorovich's dual program.  

Recall that if $\phi: \bR^N \to \bR \cup \{+\infty\}$ is a convex lower semicontinuous function on some $N$-dimensional vector space, then the following are equivalent \cite[I.5.3, pp.23]{ET}: 

- $\phi$ is continuous and finite at $x\in \bR^N$, with $\del \phi(x)$ a singleton; 

- $\phi$ is differentiable at $x$ with $\del \phi(x)=\{D\phi(x)\}$.

Thus $\phi$ is continuous throughout the interior of $dom(\phi)$. So if $\phi=\psi^c$ is a $c$-convex potential on $X$, then $\del^c \phi(x)$ is multivalued if and only if $\phi$ is non-differentiable at $x$. 

Returning to $c$-convexity: in case $\del^c \phi(x_0)$ is a singleton, say $\{y_0\}$, then $-\psi^c(x_0)+\psi(y)<c(x_0,y)$ for every $y\neq y_0$. The set of $x$'s with $y_0\in \del^c\phi(x)$ is then characterized as the subset where $$\psi(y_0) \geq c(x,y_0)+\phi(x)$$ according to \eqref{flineq}, i.e. as the superlevel set of $x\mapsto c(x,y_0)+\phi(x)$ and Assumptions (A0)--(A3) imply these superlevel sets are closed. More concretely, we find $Z(\{y_0\})$ is characterized by the equations \begin{equation} Z(\{y_0\}=\{x \in X|~\psi(y_0)=c(x,y_0)+\phi(x)\}.\label{singlesub} \end{equation} Equivalently,  we find $x\in \del^c \psi(y_0)$ if and only if \begin{equation} y_0\in argmax[\{ \psi(y) - c(x,y)+c(x,y_0)~|~ y\in Y \}]. \label{yy}\end{equation}

If $x$ is such that $\del^c \psi^c(x)$ is not a singleton, say including two distinct points $y_0,y_1$, then $x$ satisfies the equations $$\psi^c(x)+c(x,y_0)=\psi(y_0), ~~~\psi^c(x)+c(x,y_1)=\psi(y_1).$$ Subtracting these two equations, we can eliminate $\phi(x)$ and obtain the equation $$0=\psi(y_1)-\psi(y_0)+\cd(x,y_0,y_1),$$ where we abbreviate $c_\Delta(x;y, y'):=c(x,y)-c(x,y')$ for the two-pointed cross difference. If $y_0\in \del^c \psi^c(x)$, and $y\notin \del^c \psi^c(x)$, then $$0<\psi(y)-\psi(y_0)+\cd(x,y_0,y).$$ 




\begin{dfn}
Let $\phi$ be a $c$-convex potential on $X$, $\phi^{cc}=\phi$. We say $x\in dom(\phi)$ is a singular point if $\phi$ is not differentiable at $x$.  
\end{dfn}
If $x_0\in X$ is a singular point, then $\del^c \phi(x_0)$ is not a singleton and the cell $Z'(x_0)$ is described by the system of equations \begin{equation} Z'(x_0)=\{x\in X~|~ 0=\psi(y_0)-\psi(y_1) +\cd(x; y_1, y_0), ~ y_1 \in \del^c \psi^c(x_0), y_1\neq y_0\} \label{dceq}\end{equation} according to \eqref{yy}, where $\psi=\phi^c$.

From the equations \eqref{dceq} we obtain the following:
\begin{lem}\label{ldc} 
Under Assumptions (A0)--(A3), the cell $Z'(x)=Z(\del^c \psi^c(x))$ is a closed locally DC-subvariety in $X$ for every singular point $x\in dom(\psi^c)$. 
\end{lem}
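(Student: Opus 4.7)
The plan is to unpack the explicit description \eqref{dceq}: fix $y_0 \in \del^c \psi^c(x_0)$ and, for each $y_1 \in \del^c \psi^c(x_0) \setminus \{y_0\}$, define
$$g_{y_1}(x) := \psi(y_0) - \psi(y_1) + c(x,y_1) - c(x,y_0) = \psi(y_0) - \psi(y_1) + \cd(x; y_1, y_0).$$
Then $Z'(x_0) = \bigcap_{y_1} g_{y_1}^{-1}(0)$, and the task is to realize this intersection as a locally DC-subvariety of $X$.

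First, I would verify the regularity of each $g_{y_1}$. By Assumption (A1), the maps $x\mapsto c(x,y_1)$ and $x\mapsto c(x,y_0)$ are twice continuously differentiable on $dom(c_{y_1})\cap dom(c_{y_0})$, so each $g_{y_1}$ is $C^2$ in $x$. The clause in (A1) asserting $C^2$-regularity \emph{uniformly in $y$} guarantees that the Hessians $\nabla^2_{xx} c(x,y)$ are locally bounded in operator norm by a constant independent of $y$.

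Second, I would produce a uniform DC-decomposition near $x_0$. Working in a local coordinate chart and fixing a compact neighborhood $U$ of $x_0$ contained in $dom(c_{y_0})\cap dom(c_{y_1})$, pick $M>0$ with the property that $\nabla^2_{xx} c(x,y) \succeq -\tfrac{M}{2}\,\mathrm{Id}$ pointwise on $U\times Y$ (this $M$ is $y$-independent by (A1)). Then
$$g_{y_1}(x) \;=\; \bigl(g_{y_1}(x) + M\|x-x_0\|^2\bigr) \;-\; M\|x-x_0\|^2,$$
where each summand is convex on $U$; thus $g_{y_1}$ is DC on $U$ with modulus $M$ independent of $y_1$. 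Continuity of $g_{y_1}$ (from (A0)) further implies that $g_{y_1}^{-1}(0)$ is closed, so $Z'(x_0)\cap U = \bigcap_{y_1} (g_{y_1}^{-1}(0)\cap U)$ is a closed intersection of zero-sets of DC functions with uniformly controlled DC-decomposition.

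The main obstacle I anticipate is the cardinality of $\del^c\psi^c(x_0)$: when it is finite, the cell is evidently a DC-subvariety in the classical sense, but when infinite, one must argue that the intersection retains the structure of a DC-subvariety. The uniformity of the modulus $M$ from (A1) is precisely what saves the argument — it lets one represent $Z'(x_0)\cap U$ as the zero set of the single DC envelope $x\mapsto \sup_{y_1}|g_{y_1}(x)|$ (a supremum of DC functions with uniformly bounded DC-modulus is itself locally DC). Assumption (A3), which prevents $\nabla_x c$ from vanishing on open sets, can be invoked to rule out degenerate situations where the intersection collapses to a non-DC stratification. With these ingredients assembled, the closedness and local-DC conclusion of the lemma follow.
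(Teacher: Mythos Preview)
Your approach is essentially the paper's: use the uniform-in-$y$ Hessian bound from (A1) to see that the cross-differences $x\mapsto \cd(x;y_1,y_0)$ are locally DC with a $y$-independent modulus, and conclude from \eqref{dceq} that $Z'(x_0)$ is a closed locally DC-subvariety. The paper invokes Lemma~\ref{psilem1} for closedness (each $\del^c\psi(y)$ is closed), whereas you argue directly from continuity of $g_{y_1}$; both are fine.

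One small slip: a one-sided bound $\nabla^2_{xx}c(x,y)\succeq -\tfrac{M}{2}\mathrm{Id}$ does \emph{not} by itself make $g_{y_1}+M\|x-x_0\|^2$ convex, because $g_{y_1}$ involves the \emph{difference} $c(\cdot,y_1)-c(\cdot,y_0)$ and you need an upper bound on $\nabla^2_{xx}c(\cdot,y_0)$ as well. You already have the two-sided (operator-norm) bound from your first paragraph, so the fix is just to choose $M$ from that norm bound rather than from the lower bound alone. Also, your appeal to (A3) at the end is unnecessary---the paper's proof does not use it, and nothing in the DC-subvariety conclusion requires nonvanishing of $\nabla_x c$. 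Your supremum-envelope remark for the infinite-cardinality case is actually more careful than what the paper writes; the paper simply asserts the conclusion from the uniform DC modulus without spelling this out.
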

\begin{proof}
The cell $Z'(x)$ is the intersection of sets of the form $\del^c \psi(y)$, which are closed \ref{psilem1}. Assumption (A1) implies $x\mapsto \nabla^2_{xx} c(x,y)$ is locally bounded above on $X$, uniformly in $y$. Therefore every $x$ admits a neighborhood $U$ and a constant $C\geq 0$ such that $\nabla_{xx}^2 c(x,y) \leq C.Id$ uniformly with respect to $y$ throughout $U$. This implies $x\mapsto c(x,y)$ is locally semiconcave function on $X$, uniformly in $y$. Therefore the cross-differences $x \mapsto \cd(x;y,y')$ are locally DC-functions, uniformly in $y,y'$ in $Y$. This observation and equation \eqref{dceq} implies $Z'(x)$ is locally-DC subvariety of $X$.  
\end{proof}

To illustrate, consider the Euclidean quadratic cost $c(x,y)=||x-y||^2/2$ on $X=\bR^N$. The cross-difference $\cd(x,y,y')$ is an affine function of $x$, namely $$\cd(x,y,y')=\langle x, y'-y\rangle + ||y||^2/2-||y'||^2/2.$$ Therefore the cells $Z'(x)$ are locally affine subsets of $\bR^N$ for every singular point $x$.

We caution the reader that Lemma \ref{ldc} does not the specify the dimension of $Z'(x)$ -- the dimension of these cells is addressed below \ref{dim-est}. Again $Z'(x)$ is viewed as a local cell on the active domain $x\in Z_1=A$. Our next step is to describe the space of directions $T_x Z'(x)$, compare \cite[Def.10.4.6, pp.257]{Vil1}. The definition of the vector field $\eta_{avg}(x) \in T_x Z'$ involves the orthogonal projection $pr_{Z'}:T_x X \to T_x Z'$ of $T_xX$ onto the space of directions.

\begin{lem}\label{orthog}
Let $x\in X$ be a singular point where $\del^c \psi^c(x)$ is not a singleton.  Then the space of directions $T_x Z'(x)$  is a subset of the orthogonal complement  $$orthog[\{\nabla_x \cd(x; y_0, y_1)~ |~ y_1,y_0 \in \del^c \psi^c (x) \}]$$ in $ T_x X.$ 
\end{lem}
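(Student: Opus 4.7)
The plan is to differentiate the defining equations of the cell $Z'(x)$ from \eqref{dceq}. Recall that $x'\in Z'(x)$ precisely when
$$\psi(y_0)-\psi(y_1)+\cd(x';y_1,y_0)=0$$
for every pair $y_0,y_1\in \del^c \psi^c(x)$. Since the terms $\psi(y_0),\psi(y_1)$ are constants independent of $x'$, differentiating this identity along any regular tangent direction to $Z'(x)$ at $x$ should yield annihilation by $\nabla_x \cd(x;y_1,y_0)$, which is exactly the containment claimed.

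To carry this out, first fix $v\in T_x Z'(x)$ and realize it as $v=\dot\gamma(0^+)$ for a Lipschitz curve $\gamma\colon[0,\epsilon)\to Z'(x)$ with $\gamma(0)=x$; such curves suffice to generate $T_xZ'(x)$ in the DC-subvariety sense, matching the ``space of directions'' interpretation as in \cite[Def.~10.4.6]{Vil1} and Lemma \ref{ldc}. By Assumption (A1), the cost $c$ is $C^2$ in the source variable, so the composition $t\mapsto \cd(\gamma(t);y_1,y_0)$ is differentiable at $t=0^+$ with derivative $\langle \nabla_x \cd(x;y_1,y_0),v\rangle$. Evaluating the defining identity along $\gamma$ and differentiating at $t=0$ yields
$$\langle \nabla_x \cd(x;y_1,y_0),\,v\rangle=0.$$
Running $y_0,y_1$ over all pairs in $\del^c\psi^c(x)$ then shows $v$ lies in the claimed orthogonal complement.

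The main subtlety is that $Z'(x)$ is only a locally-DC subvariety (Lemma \ref{ldc}), not a smooth manifold, so $T_x Z'(x)$ must be interpreted carefully — either as the Bouligand contingent cone, the Clarke tangent cone, or the set of derivatives of Lipschitz curves emanating from $x$. In each natural interpretation the curve-based argument above still applies, so the containment
$$T_x Z'(x)\ \subset\ \mathrm{orthog}\bigl[\{\nabla_x \cd(x;y_0,y_1)\mid y_0,y_1\in \del^c\psi^c(x)\}\bigr]\ \subset\ T_x X$$
follows uniformly. No further nondegeneracy is needed at this stage — the lemma asserts only a one-sided containment, so there is no need to invoke (Twist) (A4) or a rank condition to produce transverse directions; those enter later when computing the codimension of $Z'(x)$ in \ref{dim-est}.
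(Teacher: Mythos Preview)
Your proof is correct and follows essentially the same approach as the paper: both differentiate the defining equations \eqref{dceq} of $Z'(x)$ to conclude that any tangent direction must annihilate the gradients $\nabla_x \cd(x;y_0,y_1)$. Your version is in fact more careful than the paper's, which simply asserts that a ``first-order deformation tangent to $Z'(x)$ must preserve the system of equations'' without addressing the DC-subvariety subtleties you handle explicitly.
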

\begin{proof}
Let $\phi=\psi^c$. For $y_0, y_1 \in Y$, we abbreviate $A(x,y_0,y_1)=\cd(x,y,y_0)-\psi(y)+\psi(y_0)$. If $x$ is a singular point, then $Z'(x)$ is characterized by the vanishing $A(\bar{x}, y, y')=0$ for $y,y'\in \del^c \phi(x)$. A first-order deformation $\eta$ at $x$ and tangent to $Z'(x)$ must preserve the system of equations $\{A(x,y,y')=0 | ~~ y, y' \in \del^c \phi (x)\}$. But this only if $\eta \in T_x X$ satisfies the homogeneous linear equations $\eta \cdot \nabla_x \cd(x, y, y')=0$ for every $y, y'\in \del^c \phi(x)$. 
\end{proof}







Lemma \ref{orthog} indicates the \emph{expected} Hausdorff dimension of $Z'(x)$, namely the dimension of the orthogonal complement $\{\nabla_x \cd(x,y,y')~|~ y,y' \in \del^c \psi^c(x)\}$. 
A standard application of Clarke's Nonsmooth Implicit Function theorem confirms this dimension estimate. Some preliminary notation is convenient. Recall $X$ is a Riemannian manifold-with-corners, equipped with Riemannian exponential mapping $exp_x:T_x X \to X$. If $(X,d)$ is complete Cartan-Hadamard manifold, then $exp_x$ is diffeomorphism between $T_x X$ and the universal covering space $\tilde{X}$. On general Riemannian manifold-with-corners $X$ the exponential map is a local diffeomorphism between sufficiently small open neighborhoods $U$ of $x_0 \in X$ with open balls $U'$ in the tangent space $T_{x_0} X$. So we have $C^1$ diffeomorphisms between local neighborhoods of $x_0 \in X$ with neighborhoods of $0$ in euclidean space $\bR^n$, with $n=\dim(X)=\dim(T_{x_0} X)$. Thus for every $x_0 \in X$ there exists a local diffeomorphism splitting $B_\epsilon(x_0)$ as a product $B_\epsilon(x_0') \times B_\epsilon(x_0'')$, viewed as subset of $\bR^{n-j} \times R^j$ with $x_0=(x_0', x_0'')$ for $j\geq 0$. 

Recall $c$ satisfies Assumptions (A0)--(A4). Let $\psi:Y\to \bR \cup \{-\infty\}$ be a $c$-concave potential, with $\phi=\psi^c$. Let $Z_1:=A$ be the active domain of the semicoupling defined by $\psi$, and let $x_0\in Z_1$. 
\begin{prop}
\label{dim-est} 
Under the above hypotheses, let $x_0$ be a singular point. Suppose the gradients $\{ \nabla_x \cd(x;y_0, y_1) ~|~y_0,y_1 \in \del^c \psi^c(x_0) \}$ span a $j$-dimensional subspace of $T_{x}X$ for every $x \in Z'(x_0)$. Then $Z'(x_0)$ is a codimension-$j$ local DC-subvariety of the active domain $Z_1$.

\end{prop}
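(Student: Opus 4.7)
The plan is to reduce the defining system for $Z'(x_0)$ to a square system of $j$ independent equations and then apply Clarke's Nonsmooth Implicit Function theorem in local exponential coordinates. First, fix $y_0 \in \del^c \psi^c(x_0)$ and, using the hypothesis that the set $\{\nabla_x \cd(x;y,y') : y,y' \in \del^c \psi^c(x_0)\}$ spans a $j$-dimensional subspace of $T_x X$ for every $x \in Z'(x_0)$, choose $y_1,\ldots,y_j \in \del^c \psi^c(x_0)$ so that $\nabla_x \cd(x_0;y_0,y_i)$, $i=1,\ldots,j$, form a basis of that subspace at $x_0$. By continuity of the gradients (Assumption (A1)), this basis property persists in a neighborhood of $x_0$. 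Any other pair $y,y' \in \del^c \psi^c(x_0)$ gives a gradient $\nabla_x \cd(x;y,y')$ lying in the span of the chosen ones throughout $Z'(x_0)$, so by the description \eqref{dceq} the equation $A(x,y,y') := \cd(x;y,y') - \psi(y) + \psi(y') = 0$ is, on the common zero set, implied by the $j$ selected equations $A(x,y_0,y_i)=0$.

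Next, pass to local exponential coordinates: via $\exp_{x_0}$, identify a neighborhood $U$ of $x_0$ in $X$ with a product ball $B_\epsilon(x_0') \times B_\epsilon(x_0'') \subset \bR^{n-j} \times \bR^j$, so that at $x_0$ the $\bR^j$-factor is spanned by the dual basis to $\nabla_x \cd(x_0;y_0,y_i)$, $i=1,\ldots,j$. Define
\begin{equation}
F: U \to \bR^j, \quad F(x) := \bigl(A(x,y_0,y_1),\ldots,A(x,y_0,y_j)\bigr).
\end{equation}
By Lemma \ref{ldc}, each component of $F$ is locally DC, hence in particular locally Lipschitz. Clarke's generalized Jacobian $\partial_{x''} F(x_0)$ is the convex hull of limiting ordinary Jacobians in the $\bR^j$-factor; the gradient rows converge to $\nabla_x \cd(x_0;y_0,y_i)$ restricted to that factor, which is a nonsingular $j\times j$ matrix by construction. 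Hence $\partial_{x''} F(x_0)$ is of maximal rank.

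Clarke's Nonsmooth Implicit Function theorem then yields $\epsilon' > 0$ and a locally Lipschitz map $g: B_{\epsilon'}(x_0') \to B_\epsilon(x_0'')$ with $g(x_0')=x_0''$, such that in the reduced neighborhood $\{F=0\} = \mathrm{graph}(g)$. By the first paragraph this is precisely $Z'(x_0) \cap U$. Finally, since the component functions $A(x,y_0,y_i)$ are DC in $x$ (each $\cd(\cdot;y_0,y_i)$ being DC by the proof of Lemma \ref{ldc}, and the $\psi$-terms being constants), and since the implicit solution of a square DC system with invertible Clarke Jacobian is itself DC (cf. the standard construction via truncated quadratic perturbations to realize each level set equation as a difference of convex Lipschitz functions on $\bR^{n-j}$), the graph of $g$ is a local DC-subvariety of $U \cap Z_1$ of codimension $j$, as required.

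The main technical obstacle is the last point: upgrading the conclusion of Clarke's theorem from merely Lipschitz to DC. The Lipschitz implicit function is immediate from invertibility of Clarke's Jacobian, but promoting this to a DC representation requires using that the locally uniform semiconcavity bound $\nabla^2_{xx} c \leq C\, \mathrm{Id}$ (Assumption (A1)) gives a uniform modulus for the convex/concave decomposition of each $A(\cdot,y_0,y_i)$, so that solving the $i$-th equation for the $i$-th coordinate produces a difference of convex functions; the remaining equations are then composed with a DC map, preserving DC regularity. Once this is in place, the codimension count follows from the rank hypothesis and the proof is complete.
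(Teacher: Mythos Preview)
Your overall architecture matches the paper's: fix $y_0$, select $j$ points $y_1,\ldots,y_j$ so that the gradients $\nabla_x\cd(x_0;y_0,y_i)$ are independent, pass to exponential coordinates splitting $\bR^{n-j}\times\bR^j$, and apply an implicit function theorem to the map $G(x)=(A(x,y_1),\ldots,A(x,y_j))$. The paper then invokes the \emph{DC implicit function theorem} of Kitagawa--McCann \cite[Thm 3.8]{KitMc}, which takes a Lipschitz DC map with maximal-rank Clarke Jacobian and returns a biLipschitz \emph{DC} implicit function directly. You instead invoke Clarke's Lipschitz implicit function theorem and then attempt to promote the resulting $g$ from Lipschitz to DC by hand. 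Your own final paragraph correctly identifies this as the real obstacle; the sketch you give (solve one coordinate at a time, compose DC maps) is essentially an outline of the proof of the DC-IFT itself, and as written it is not a proof --- in particular, solving the $i$-th equation for the $i$-th coordinate does not automatically produce a difference of convex functions without the careful argument in \cite{KitMc}. The clean fix is simply to cite that theorem, as the paper does.

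There is a second, smaller gap. You assert that because any additional gradient $\nabla_x\cd(x;y,y')$ lies in the span of the selected ones, the extra equation $A(x,y,y')=0$ is ``implied'' by the $j$ selected equations on their common zero set. Linear dependence of gradients does not by itself force this: two functions can have proportional gradients on a set without one vanishing where the other does. The paper handles this step differently, appealing to the subdifferential characterization \eqref{dceq} and the fact that all the equalities hold at $x_0$ to get $G(x)=0\Longleftrightarrow x\in Z'(x_0)\cap B_\epsilon(x_0)$ locally; you should argue that point rather than deduce it from the rank hypothesis alone.
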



\begin{proof} 
Fix some $y_0\in \del \phi(x_0)$. Abbreviate $A(x,y):=c(x,y)-c(x,y_0)$. Consider the mapping $G: B_\epsilon(x_0) \to \bR^d$ defined by $$G(x)= (A(x,y_1), A(x,y_2), \ldots A(x,y_j)) $$ for $y_0, y_1, \ldots, y_j \in \del^c \psi^c(x_0)$. Assume $$\{\nabla_x  A(x_0,y_i, y_0)\}_{1 \leq i \leq j}$$ is a linearly independent subset of $T_{x_0} X$. The map $G:B_\epsilon(x_0) \to \bR^j$ is local DC-function according to Lemma \ref{ldc}. 

We use exponential mapping to obtain local $C^1$-diffeomorphism between an open neighborhood of $x_0$ in $X$. Next we apply the $DC$-implicit function theorem as stated in \cite[Thm 3.8]{KitMc}, and conclude there exists $\epsilon>0$ and a biLipschitz DC-mapping $\phi$ from $B_\epsilon(x_0') \subset \bR^{n-j}$ to $B_\epsilon(x_0'') \subset \bR^j$ such that, for all $x=(x', x'') \in  B_\epsilon(x_0') \times B_\epsilon(x_0'') \subset \bR^{n-j} \times \bR^j$ we have $G(x)=G(x',x'')=0$ if and only if $x''=\phi(x')$. 

The basic subdifferential inequalities \eqref{dceq} imply $G(x)=0$ if and only if $x\in Z'(x_0) \cap B(x_0,\epsilon)$. Now because $Z'(x_0)$ can be covered by countably many sufficiently small open balls, we conclude that $Z'(x_0)$ is a local DC-subvariety with Hausdorff dimension $\dim_{\mathscr{H}} Z'(x_0)$ $=n-j$. 
\end{proof}

The above \ref{dim-est} is a type of ``constant-rank" theorem for Lipschitz maps, c.f. \cite{clarke1976inverse}.




N.B. The description of $T_{x} Z'(x)$ is symmetric with respect to $y_1, y_0$. There is further symmetry from the additive relations between cross-costs $$\cd(x;y_0,y_1)+\cd(x;y_1, y_2)=\cd(x;y_0, y_2).$$ This implies the obvious estimate $ codim Z'(x) \leq \#(\del^c \psi^c (x))-1$ under general conditions.




Recently \cite{KitMc} obtained an explicit parameterization of singularities arising from Euclidean quadratic costs, employing a hypothesis of affine independence between the connected components of the subdifferentials $\del^c \psi^c(x)$. Their parameterization requires a global splitting $X=X_0 \times X_1$ of the source domain to express singularities (``tears" in their terminology) as the graphs of $DC$-functions $G: X_0 \to X_1$ as above. Further hypotheses on the convexity of source and target are required for their arguments. 


\section{The Descending Filtration $\{Z_j\}_{j\geq 0}$}\label{ksf}
Following an idea of Prof. D. Bar-Natan \cite{Bar2002} we ``skewer the cube" $2^X$ (more accurately the support $spt(Z)$) according to local codimensions and obtain a descending filtration. The previous section described various cells $Z'(x)$ which decompose the activated source $A$ of optimal semicouplings. Assembling these cells into the Kantorovich functor $Z: 2^Y \to 2^X$ leads to a useful filtration of the source $X$. 

\begin{dfn}\label{dimdef}
For integers $j=0,1,2,\ldots$, let $$Z_{j+1}:=\{x \in X | ~~  \dim[span\{\nabla_x \cd(x; y, y') ~|~ y,y' \in \del^c \psi^c (x)\}] \geq j\}$$ where $Z'(x)=Z(\del^c \psi^c (x))$ for a $c$-concave potential $\psi^{cc}=\psi$ on target space $Y$.
\end{dfn}
According to the definition, $Z_j$ is supported on the subcategory $spt(Z)$ of $2^Y$ for every $j\geq 1$. The activated support $A$ coincides with the support of $Z_1$. The cells $Z(Y_I)$ are closed (\ref{psilem1}). If the support $spt(Z)$ of the functor is finite, then the unions $Z_{j+1}$ are closed. However if $spt(Z)$ is infinite, then the union $Z_2$, $Z_3$, $\ldots$ is possibly not closed and depends on the cost $c$. For example, the Euclidean quadratic cost $c=d^2/2$ typically has $Z_2$ not closed when the target $Y$ has $\dim(Y)>0$. This motivates our study of repulsion costs in \cite[Ch.4]{martel}. 





Now we replace the $c$-subdifferentials with an important ``localized" version using (Twist) condition (A4). The $c$-subdifferential $\del^c \psi^c$ of a $c$-convex function $\psi^c : X\to \bR \cup \{+\infty\}$ is non-local subset of $Y$. The subsets $\del^c \psi^c \subset Y$ depends on global data, namely the values of $c(x,y)$ for all $y\in dom(c_{x})$. Hence $\del^c \psi^c (x_0)$ depends on the values of $\psi(y)$ and $\psi^c(x)$ for every $y\in Y, x\in X$ and not simply the local behaviour of $\psi^c $ near $x_0$. It is useful to introduce a local subdifferential, namely the so-called ``subgradients" of a function $\psi^c:X\to \bR \cup \{+\infty\}$.

\begin{dfn}[Local Subgradients $\del_\bullet \phi$]
Let $U$ be open set in $X$, sufficiently small such $U$ is $C^1$-diffeomorphic to an open subset of $\bR^n$ where $n=\dim(X)$. Let $\phi: U \to \bR$ be a function. Then $\phi$ is subdifferentiable at $x$ with subgradient $v^*\in T_x^*X$ if $$\phi(z)\geq \phi(x) + v^*(z-x) + o(|z-x|) \text{~for all~} z \text{~near~}x.$$ Let $\del_\bullet \phi(x) \subset T_x^* X$ denote the set of all subgradients to $\phi$ at $x$. Here $o$ is the ``little-oh" notation.
\end{dfn} 
Evidently the subgradient $\del_\bullet \phi(x)$ is local, and depending on the values of $\phi$ near $x$. Moreover $\del_\bullet \phi(x)$ is a closed convex subset of $T_x^*X$ for every $x\in dom(\phi)$. 

The Assumptions (A0)--(A3) imply $\psi^c: X\to \bR \cup \{+\infty\}$ is locally semiconvex over its domain $dom(\psi^c) \subset X$ (Lemma \ref{lip2}). In otherwords, every $x\in dom(\psi^c)$ admits an open neighborhood $U$ of $x$ and a constant $C\geq 0$ such that $D^2 \psi^c \geq -C Id$ throughout $U$, where $D^2\psi^c=D^2_{xx} \psi^c$ is the distributional Hessian in the source variable $x$ (\cite[Theorem 14.1, pp.363]{Vil1}). So given a $c$-convex potential $\psi^c$ on $X$, for every $x_0\in X$ there exists open neighborhood $U$ and $C\geq 0$ such that $\psi^c |_{U_x} + C ||x||^2/2$ is strictly convex throughout $U$. This implies the local subdifferential and subgradients $\del_\bullet \psi^c(x)$ coincide with the convex-analytic subdifferential of the local function $\psi^c|_U+C||x||^2/2$, when restricted to the sufficiently small neighborhood $x$ in $X$. 


Having introduced the local subdifferential, there is important comparison between $\del^c \psi^c (x_0) \subset Y$ and $\del_\bullet \psi^c(x_0)$, assuming (Twist) condition Assumption (A4). This relation is the inclusion
\begin{equation} \del^c \psi^c (x_0) \subset \{y\in Y | -\nabla_x c(x_0,y) \in \del_\bullet \psi^c (x_0)\}. \label{grad}\end{equation} 

Observe $\del^c \psi^c (x_0)$ is nonempty whenever $x_0\in dom(\psi^c)$. We abuse notation and  denote $\nabla_x c(x,y)$ for the canonical covector in $T_x^* X$, with the tangent vector in $T_xX$ using the ambient Riemannian structure. The inclusion \eqref{grad} allows us to replace the global $c$-subdifferential $\del^c \psi^c$ with the local convex set of subgradients $\del_\bullet \psi^c$. The inclusion is generally strict. However it produces basic upper bounds on the Hausdorff dimension of $\del^c \psi^c$. We quote the following theorem of G. Alberti: 
\begin{thm}[\cite{Alberti}]\label{alb}
Let $f: \bR^n \to \bR$ be proper lowersemicontinuous convex function. For every $0<k<n$, let $S^k(f) \subset \bR^n$ be the subset defined by $$S^k(f):=\{x\in \bR^n| \dim_{ \mathscr{H}} (\del_\bullet (f)) \geq k\}.$$ Then $S^k(f)$ can be covered by countably many $(n-k)$-dimensional DC-manifolds.
\end{thm}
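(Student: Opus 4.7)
The plan is to exhibit $S^k(f)$ as a countable union of affine $(n-k)$-dimensional subspaces of $\bR^n$; such affine planes are in particular $(n-k)$-dimensional DC-manifolds, so this would suffice. The key mechanism is a Legendre--Fenchel duality argument that converts ``large subdifferential at $x$'' into ``several simultaneous affine constraints on $x$''.

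Core lemma: for any affinely independent $(k{+}1)$-tuple of \emph{fixed} vectors $\xi_0,\ldots,\xi_k \in \bR^n$, define
$$E(\xi_0,\ldots,\xi_k) := \{x \in \bR^n : \xi_i \in \del_\bullet f(x) \text{ for each } i=0,\ldots,k\}.$$
The condition $\xi_i \in \del_\bullet f(x)$ is equivalent to the Fenchel identity $f(x)+f^*(\xi_i)=\langle \xi_i,x\rangle$. Subtracting the $i=0$ identity from the $i=1,\ldots,k$ identities gives the $k$ affine equations
$$\langle \xi_i-\xi_0,\, x\rangle = f^*(\xi_i)-f^*(\xi_0), \qquad i=1,\ldots,k.$$
Affine independence of the $\xi_i$'s forces these $k$ equations to be linearly independent in $x$, so $E(\xi_0,\ldots,\xi_k)$ is contained in an affine $(n-k)$-plane, a DC-manifold of the desired dimension.

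To cover $S^k(f)$ by countably many such $E$'s, I would enumerate affinely independent rational tuples $(q_0,\ldots,q_k)\in(\bQ^n)^{k+1}$ together with dyadic tolerances $\epsilon=2^{-m}$, and work with the thickened sets
$$F(q,\epsilon) := \{x\in\bR^n : \del_\bullet f(x)\text{ contains some }\eta_i\text{ with }|\eta_i-q_i|<\epsilon\}.$$
For each $x\in S^k(f)$, the closed convex set $\del_\bullet f(x)$ has dimension $\ge k$, so its relative interior contains a $k$-simplex of positive $k$-volume admitting rational approximate vertices; a pigeonhole and diagonal argument therefore assigns $x$ to infinitely many $F(q,\epsilon)$. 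Each $F(q,\epsilon)$ lies in an $O(\epsilon)$-neighbourhood of an affine $(n-k)$-plane by the core lemma applied to nearby approximate constraints, and refining $\epsilon\to 0$ along a countable sequence and intersecting promotes these tubes to a countable cover by honest $(n-k)$-dimensional DC-manifolds.

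The main obstacle is exactly this last promotion: the subdifferential $\del_\bullet f(x)$ may be supported on an \emph{irrational} $k$-plane containing no rational vertices at all, so the naive enumeration by rational $(k{+}1)$-tuples does not meet $\del_\bullet f(x)$. The remedy I sketched above, passing through thickenings $F(q,\epsilon)$ and shrinking $\epsilon$, must be combined with a uniform measurable selection of $(k{+}1)$ vertex-subgradients $x\mapsto \xi_i(x)\in\del_\bullet f(x)$ (using upper semicontinuity of $\del_\bullet f$ and a Kuratowski-type selection theorem) in order to certify that the limiting sets remain contained in $(n-k)$-dimensional DC-manifolds rather than merely a sequence of progressively thinner tubes; I expect this measurable-selection step to be the most delicate point of the argument.
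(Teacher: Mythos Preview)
The paper does not prove this theorem; it is quoted verbatim from \cite{Alberti} and invoked as a black box in the proof of Proposition~\ref{sing-dim}. There is therefore no in-paper proof to compare against, and what follows is an assessment of your attempt on its own merits.

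Your core lemma is correct and attractive: if $\xi_0,\ldots,\xi_k$ are \emph{fixed} and affinely independent, the Fenchel--Young equalities $f(x)+f^*(\xi_i)=\langle\xi_i,x\rangle$ do confine $x$ to an affine $(n-k)$-plane. The gap is exactly where you locate it, and the remedy you propose does not close it. A measurable selection $x\mapsto(\xi_0(x),\ldots,\xi_k(x))$ produces only the tautology
\[
\langle\xi_i(x)-\xi_0(x),\,x\rangle=f^*(\xi_i(x))-f^*(\xi_0(x)),
\]
in which both sides move with $x$; this is not a constraint placing $x$ on any \emph{fixed} $(n-k)$-dimensional object. Likewise, sending $\epsilon\to0$ in the tubes $F(q,\epsilon)$ does not yield a countable cover by planes: a given $x$ lies in $F(q_m,\epsilon_m)$ only along sequences with $q_m\to\xi(x)$, and the limit tuple $\xi(x)$ varies from point to point, so the limiting family of planes is uncountable. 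Intersecting nested tubes around \emph{different} planes gives nothing useful.

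What your argument is missing is the \emph{monotonicity} of $\del_\bullet f$. Alberti's proof (building on Zaj\'{\i}\v{c}ek) decomposes $S^k(f)$ not by the location of subgradients but by the approximate \emph{direction} of the affine hull of $\del_\bullet f(x)$: for each $k$-dimensional subspace $W$ drawn from a countable dense family, and each rational $\delta>0$, one gathers those $x$ whose subdifferential contains a $\delta$-nondegenerate $k$-simplex nearly parallel to $W$. Monotonicity of $\del_\bullet f$ then forces the orthogonal projection onto $W^\perp$ to be injective on each such piece, with inverse expressible as a difference of convex functions; hence each piece is the graph of a DC map $W^\perp\to W$, i.e.\ an $(n-k)$-dimensional DC-manifold. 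Your Fenchel-duality observation cleanly identifies the normal directions to the singular stratum, but it does not by itself supply the graph structure over a fixed base; monotonicity is the mechanism that does.
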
 
In otherwords $S^k(f)$ is countably $(n-k)$-rectifiable and has Hausdorff dimension $\leq (n-k)$. If $f$ is $+\infty$-valued on $\bR^n$, then Alberti's method shows $S^k(f)$ can be covered by countably many $(n'-k)$-dimensional manifolds where $n'=\dim_{\mathscr{H}}( dom(f))$. The domain $dom(f)$ is a closed convex subset of $\bR^n$ having well-defined Hausdorff dimension.


\begin{prop}
\label{sing-dim}
Let $c: X\times Y \to \bR \cup \{+\infty\}$ be cost satisfying Assumptions (A0)--(A4). Let $\psi: Y\to \bR \cup \{-\infty\}$ be $c$-concave potential $\psi^{cc}=\psi$ with singularity functor $Z: 2^Y \to 2^X$. Define $n':=\dim_{\mathscr{H}}(dom (\psi^c))$. Then for every integer $j\geq 1$, the subvariety $Z_j$ has Hausdorff dimension $$\dim_{\mathscr{H}} (Z_{j}) \leq n'-j+1.$$ 
\end{prop}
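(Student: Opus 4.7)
The plan is to reduce the statement to a direct application of Alberti's theorem \ref{alb}, once we relate the dimension condition defining $Z_j$ to a lower bound on the Hausdorff dimension of the local subgradient $\del_\bullet \psi^c(x)$. The first step is the elementary linear-algebraic observation that for a subset $S$ of a vector space, the span of all differences $\{v-v' \mid v,v'\in S\}$ has dimension equal to that of the affine hull $\mathrm{aff}(S)$. Applying this with $S=\{\nabla_x c(x,y)\mid y\in \del^c\psi^c(x)\}$, the defining condition $x\in Z_j$ translates to $\dim \mathrm{aff}(S)\geq j-1$.

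Next I would invoke the (Twist) condition (A4): the map $y\mapsto \nabla_x c(x,y)$ is injective, so $S$ really is the image of $\del^c\psi^c(x)$. Combined with the inclusion \eqref{grad}, which places $-S$ inside the closed convex set $\del_\bullet\psi^c(x)\subset T_x^*X$, we conclude that $\mathrm{aff}(-S)\subset \mathrm{aff}(\del_\bullet\psi^c(x))$ and hence $\dim_\mathscr{H}(\del_\bullet\psi^c(x))\geq j-1$. Consequently
\begin{equation*}
Z_j\;\subset\;\bigl\{x\in dom(\psi^c)\mid \dim_\mathscr{H}(\del_\bullet\psi^c(x))\geq j-1\bigr\}.
\end{equation*}

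To apply Alberti's theorem, which is stated for proper lower semicontinuous \emph{convex} functions on $\bR^n$, I would use the local semiconvexity of $\psi^c$ established in Lemma \ref{lip2}. Cover $dom(\psi^c)$ by countably many coordinate neighborhoods $U_\alpha$ (via the exponential chart) on each of which there is a constant $C_\alpha\geq 0$ such that $\psi^c+C_\alpha\|x\|^2/2$ is genuinely convex. Adding the smooth quadratic $C_\alpha\|x\|^2/2$ shifts the local subgradient by an affine-linear map of $x$, which does not alter the Hausdorff dimension of the subgradient set at any point. Hence the locus where $\del_\bullet$ has dimension $\geq j-1$ for $\psi^c|_{U_\alpha}$ agrees with the corresponding locus $S^{j-1}$ for the convex function $\psi^c+C_\alpha\|\cdot\|^2/2$. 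Alberti's theorem \ref{alb}, applied with $k=j-1$ and with ambient dimension $n'=\dim_\mathscr{H}(dom(\psi^c))$ (using the remark after \ref{alb} that extended-valued convex functions give the same bound in terms of $\dim_\mathscr{H}(dom f)$), covers $S^{j-1}\cap U_\alpha$ by countably many DC-manifolds of dimension $n'-(j-1)=n'-j+1$.

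Finally, taking a countable union over $\alpha$ and over the countable Alberti covering in each chart yields a countable cover of $Z_j$ by $(n'-j+1)$-dimensional DC-manifolds, which gives $\dim_\mathscr{H}(Z_j)\leq n'-j+1$ as desired. The step I expect to require the most care is the reduction to Alberti's convex setting: verifying that (i) the local semiconvexity constants $C_\alpha$ are indeed uniform on a neighborhood (justified by Assumption (A1)), and (ii) the identification of local subgradients of $\psi^c$ with convex-analytic subgradients of $\psi^c+C_\alpha\|\cdot\|^2/2$ respects Hausdorff dimension pointwise. Once this is in place, both the inclusion \eqref{grad} and Alberti's rectifiability theorem do all the remaining work.
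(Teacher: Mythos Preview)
Your proposal is correct and follows essentially the same route as the paper: localize via semiconvexity to reduce to a genuinely convex function, use the inclusion \eqref{grad} to bound the dimension of $\del_\bullet\psi^c(x)$ below by the span of the cross-difference gradients, and then invoke Alberti's theorem with $k=j-1$ on each chart. Your indexing (applying Alberti with $k=j-1$ to obtain the bound $n'-j+1$) is in fact cleaner than the paper's own write-up, and your appeal to (Twist) is harmless but not actually needed for the dimension estimate, since only the inclusion $-S\subset\del_\bullet\psi^c(x)$ matters.
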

\begin{proof}

Consider the inclusion \eqref{grad}. For given $x_0$, let $U$ be the open neighborhood and $C>0$ such that $\psi^c |_{U} + C ||x||^2/2$ is strictly convex throughout $U$. For $x\in U$ the $c$-subdifferentials $\del^c \psi^c(x)$ are contained in the closed convex local subdifferentials  $\del_\bullet \psi^c|U(x)$. Now we have equality \begin{equation}\del_\bullet (\psi^c|_U(x) + C |x|^2/2) = \del_\bullet \psi^c|_U(x) + C \langle -, x\rangle \label{subadd}.\end{equation} So the local subdifferential $\del_\bullet (\psi^c_U + C |x|^2/2)$ in $T_x^* X$ is an affine translate of $\del_\bullet \psi^c|_U$ by the linear functional $C\langle - ,x\rangle$. 

Next we apply Alberti's theorem to the localized convex function $\psi^c|_U + C |x|^2/2$. Thus $S^k(\psi^c|_U + C|x|^2/2)$ and $S^k(\psi^c|_U)$ can be covered by countably many $(n'-k)$-dimensional manifolds where $n':=\dim_{\mathscr{H}}(dom \psi^c|_U)$.

Finally we relate $\dim_{\mathscr{H}} (\del_\bullet \psi^c|_U(x))$ to the Hausdorff dimension of $Z'(x)=Z(\del^c \psi^c(x))$ $=\cap_{y\in \del^c \psi^c (x)} \del^c \psi(y)$. From the definition of subgradients, we have $$conv(\{ \nabla_x c(x,y) | y\in \del^c \psi^c(x)\}) \subset \del_\bullet \psi^c(x).$$ Moreover the closed convex hull $conv(\{ \nabla_x c(x,y) | y\in \del^c \psi^c(x)\})$ has dimension $$j=\dim(span\{\nabla_x \cd(x,y,y_0)~|~ y\in \del^c \psi^c(x) \}).$$ For every $x\in Z_j$ we conclude $Z_j \cap U \subset S^j(\psi^c|_U + C |x|^2/2),$ which according to Alberti's theorem \ref{alb} yields the upper bound $ \dim_{\mathscr{H}}(Z_j \cap U) \leq n'-j.$ To conclude, we observe that $Z_j$ can be covered by countably many open neighborhoods $U$ of points $x\in Z_j$. 

\end{proof}

We supplement \ref{sing-dim} with the following application of Clarke's Implicit Function Theorem, which gives a criterion for the singularities $Z_2, Z_3, \ldots$ to be \emph{closed} subsets of $X$.
 



\begin{prop} 
\label{closedZ}
Let $c:X\times Y \to \bR$ be a cost satisfying Assumptions (A0)--(A4). Let $\phi=\psi^c$ be a $c$-convex potential on $X$. Suppose the sup-norm of the gradients $\nabla_x \cd(x,y_0, y_1)$, $y_0, y_1\in \del^c \phi(x)$ is nonzero, uniformly with respect to $x\in Z_2$. Then $Z_2$ is a closed subset of $X$. 


\end{prop}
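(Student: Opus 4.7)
The strategy is to verify directly that $Z_2$ is sequentially closed, via a closed-graph property for the $c$-subdifferential $\del^c \phi$ combined with the uniform lower bound on $\|\nabla_x \cd\|$ to prevent any collapse of dimension in the limit. Let $\{x_k\} \subset Z_2$ converge to some $x_\infty \in X$; the goal is to place $x_\infty \in Z_2$.

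By the standing hypothesis, there is a uniform $\delta > 0$ such that for each $k$ one may select $y_k, y'_k \in \del^c \phi(x_k)$ with $\|\nabla_x \cd(x_k; y_k, y'_k)\| \geq \delta$. I would next extract convergent subsequences $y_k \to y_\infty$ and $y'_k \to y'_\infty$: this is immediate when $Y$ is compact, and in the general setting it is forced by the identity $\psi(y_k) - c(x_k, y_k) = -\phi(x_k)$ together with the coercivity clause of (A0), which confines $\{y_k\}$ to a compact subset of $dom(\psi)$. For each fixed $y \in Y$, I would then pass the subdifferential inequality $\psi(y) - c(x_k, y) \leq \psi(y_k) - c(x_k, y_k)$ to the limit, using joint continuity of $c$ from (A0) together with upper semicontinuity of $\psi$ from Lemma \ref{psilem1}, to obtain
\[
\psi(y) - c(x_\infty, y) \leq \limsup_k [\psi(y_k) - c(x_k, y_k)] \leq \psi(y_\infty) - c(x_\infty, y_\infty),
\]
valid for every $y \in Y$. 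This is precisely the statement $y_\infty \in \del^c \phi(x_\infty)$; symmetrically $y'_\infty \in \del^c \phi(x_\infty)$.

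To finish, joint continuity of $\nabla_x c$ in $(x,y)$, implicit in Assumption (A1), yields $\nabla_x \cd(x_k; y_k, y'_k) \to \nabla_x \cd(x_\infty; y_\infty, y'_\infty)$, and the uniform lower bound transfers to the limit by continuity of the norm, so $\|\nabla_x \cd(x_\infty; y_\infty, y'_\infty)\| \geq \delta > 0$. By the (Twist) Assumption (A4), a nonzero cross-gradient forces $y_\infty \neq y'_\infty$, so $\del^c \phi(x_\infty)$ contains two distinct points whose cross-gradient is nonzero. Hence the span of $\{\nabla_x \cd(x_\infty; y_0, y_1) \mid y_0, y_1 \in \del^c \phi(x_\infty)\}$ is at least one-dimensional, placing $x_\infty \in Z_2$ by Definition \ref{dimdef}.

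The main obstacle is the closed-graph step: passing the sequence of optimality conditions through the limit relying only on upper semicontinuity of $\psi$ (not continuity) requires care, since the $c$-subdifferential is genuinely only upper-hemicontinuous rather than continuous as a set-valued map. A secondary, more setting-dependent obstacle is the precompactness of $\{(y_k, y'_k)\}$ when $Y$ itself is noncompact, where one must rule out escape to infinity using coercivity of $c_y$ from (A0) together with local boundedness of $-\phi$ near $x_\infty$; in the repulsion-cost examples of \cite{martel} both of these issues are handled automatically by the setting.
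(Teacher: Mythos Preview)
Your argument is correct and considerably more explicit than what the paper provides: there the proof is a one-line invocation of Clarke's Nonsmooth Implicit Function Theorem, with a pointer to the proof of Theorem~10.50 in \cite{Vil1}. Your direct sequential-closedness approach --- extract limit points $y_\infty, y'_\infty$ via the closed-graph property of $\del^c\phi$, then pass the uniform gradient lower bound through continuity of $\nabla_x c$ and invoke (Twist) to separate $y_\infty$ from $y'_\infty$ --- is more elementary and self-contained, and makes transparent exactly why the uniform bound is the right hypothesis (it prevents the two subdifferential points from coalescing in the limit). The paper's route via Clarke has the advantage of fitting into the same machinery already deployed for Proposition~\ref{dim-est} and the generalization to $Z_j$ that follows, but for the bare closedness of $Z_2$ your method is cleaner.

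One small caveat: your justification for precompactness of $\{y_k\}$ appeals to the coercivity clause in (A0), but that assumption gives compact sublevels of $x\mapsto c(x,y)$ for fixed $y$, not of $y\mapsto c(x,y)$ for fixed $x$; as written it does not directly confine $\{y_k\}$. In the paper's standing setup $Y$ is a manifold-with-corners (and in practice compact or finite), so the issue is moot, and you yourself flag it as setting-dependent --- but the specific attribution to (A0) is slightly off.
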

\begin{proof}
Consequence of Clarke's Implicit Function Theorem, c.f. \cite[Proof of Theorem 10.50, pp.262--264]{Vil1}.
\end{proof}

The above proposition \ref{closedZ} can be generalized to the following criterion for $Z_j$ to be a closed subset. If $I$ is a collection of vectors, then $span_\bZ(I)$ is the group of all finite $\bZ$-linear combinations $\sum_{v\in I} n_v v $, for $n_v\in \bZ$.  We define the ``height" of $I$, denoted $ht(I)$, to be the \emph{volume} of the quotient $span(I)/span_{\bZ}(I)$ with respect to a fixed Lebesgue measure.

\begin{prop}
Let $c:X\times Y \to \bR$ be a cost satisfying Assumptions (A0)--(A4). Let $\phi=\psi^c$ be a $c$-convex potential on $X$. Fix $j\geq 1$. For every $x\in Z_j$, let $I_x$ be the collection of gradients $$I_x:=\{\nabla_x \cd(x,y_0, y_1)~|~y_0, y_1 \in \del^c \phi(x)\}.$$ Suppose the heights $ht(I_x)$ are bounded away from zero, uniformly with respect to $x\in Z_j$. Then $Z_j$ is a closed subset of $X$. 
\end{prop}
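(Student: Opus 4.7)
The plan is to prove closedness of $Z_j$ sequentially, generalizing the Clarke-style argument of Proposition \ref{closedZ} (the case $j=2$). Let $\{x_k\}_{k\geq 1}\subset Z_j$ converge to $x_\infty\in X$; I aim to show $x_\infty\in Z_j$, that is, $\dim\operatorname{span}(I_{x_\infty})\geq j-1$. All reasoning takes place inside a single coordinate chart around $x_\infty$, in which gradients and exterior products can be compared across the nearby tangent spaces via the local $C^1$-trivialization produced by the Riemannian exponential.

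First, I would select uniformly non-degenerate frames. For each $k$, the hypothesis $x_k\in Z_j$ provides vectors in $I_{x_k}$ spanning a subspace of dimension $\geq j-1$; choose a base point $y_0^k\in \del^c\phi(x_k)$ and partners $y_1^k,\ldots,y_{j-1}^k \in \del^c\phi(x_k)$ so that the gradients $v_i^k:=\nabla_x\cd(x_k;y_0^k,y_i^k)$, $1\leq i\leq j-1$, are linearly independent. Since $\operatorname{span}_\bZ(I_{x_k})$ has rank $j-1$ and covolume $\geq \varepsilon$ inside $\operatorname{span}(I_{x_k})$, any $(j-1)$ linearly independent elements of $I_{x_k}$ generate a full-rank sublattice of finite index, whence $\|v_1^k\wedge\cdots\wedge v_{j-1}^k\|\geq ht(I_{x_k})\geq\varepsilon$ for all $k$.

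Next I extract convergent subsequences. Assumptions (A0)--(A1) ensure $\del^c\phi(x_k)\subset Y$ eventually lies in a common compact subset of $Y$, so a diagonal argument produces $y_i^k\to y_i^\infty\in Y$ for each $0\leq i\leq j-1$. The graph of $\del^c\phi$ is closed: from $\phi(x_k)+c(x_k,y_i^k)=\psi(y_i^k)$ and Lemma \ref{psilem1} (lower semicontinuity of $\phi$, upper semicontinuity of $\psi$) together with continuity of $c$, one obtains $\phi(x_\infty)+c(x_\infty,y_i^\infty)=\psi(y_i^\infty)$, so $y_i^\infty\in\del^c\phi(x_\infty)$. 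Assumption (A1) then yields $v_i^k\to v_i^\infty:=\nabla_x\cd(x_\infty;y_0^\infty,y_i^\infty)\in I_{x_\infty}$.

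Finally I pass the lower bound through to the limit: by continuity of the exterior product, $\|v_1^\infty\wedge\cdots\wedge v_{j-1}^\infty\|\geq\varepsilon>0$, so the limit vectors remain linearly independent and $\dim\operatorname{span}(I_{x_\infty})\geq j-1$, whence $x_\infty\in Z_j$. The main obstacle is the lattice-theoretic step above, which translates the uniform covolume bound $ht(I_{x_k})\geq\varepsilon$ on the possibly infinite family $I_{x_k}$ into a uniform $(j-1)$-volume lower bound on arbitrary linearly independent $(j-1)$-tuples drawn from $I_{x_k}$; this is what prevents the selected frames from collapsing to lower dimension in the limit. Once this selection is in place, the remaining ingredients—graph closedness of $\del^c\phi$, continuity of $\nabla_x c$, and compactness of $\del^c\phi(x_k)$—all follow directly from the standing assumptions (A0)--(A4).
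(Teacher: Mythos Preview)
The paper does not give a detailed proof of this proposition; it simply declares it ``a consequence of Clarke's Implicit Function Theorem'' and leaves it to the reader, exactly as for Proposition~\ref{closedZ}. Your sequential argument is a genuinely different route: rather than invoking a nonsmooth implicit-function statement to cut out $Z_j$ locally, you test closedness directly on convergent sequences and use the uniform covolume bound to prevent the spanning frame from collapsing in the limit. This is more elementary and makes transparent exactly where the hypothesis $ht(I_x)\geq\varepsilon$ enters, namely as the lower bound on the $(j-1)$-volume of the selected frame. The Clarke route, by contrast, packages the non-degeneracy into a generalized Jacobian condition and obtains closedness as a by-product of a local graph description; the paper's phrasing suggests this is what the author has in mind, but the details are not spelled out.

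Your argument is essentially correct, with one small point to tighten. As written, the lattice step assumes $\operatorname{span}_{\bZ}(I_{x_k})$ has rank exactly $j-1$, but membership in $Z_j$ only guarantees $\dim\operatorname{span}(I_{x_k})\geq j-1$; for $x_k\in Z_{j+1}$ the covolume $ht(I_{x_k})$ is a $d$-dimensional volume with $d>j-1$, and then a $(j-1)$-tuple generates only a sublattice of infinite index, so the inequality $\|v_1^k\wedge\cdots\wedge v_{j-1}^k\|\geq ht(I_{x_k})$ is not the right comparison. The fix is immediate: pass to a subsequence along which $d_k:=\dim\operatorname{span}(I_{x_k})$ is constant, equal to some $d\geq j-1$, choose $d$ linearly independent vectors from $I_{x_k}$, and run your argument with $d$ in place of $j-1$; the limit then lies in $Z_{d+1}\subset Z_j$. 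You should also justify the claim that the $y_i^k$ eventually lie in a common compact subset of $Y$---this does not follow from (A0)--(A1) alone, which give coercivity in $x$, not in $y$; in the paper's setting it is implicit either from compactness of $Y$ or from (A5) together with the finiteness of $\phi$ on a neighborhood of $x_\infty$.
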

\begin{proof}
We leave the proof to the reader, being again a consequence of Clarke's Implicit Function Theorem. 
\end{proof}




\section{Proof of Theorem \ref{B}}\label{zz}

This section generalizes the homotopy reduction constructed in \ref{retract1} and the proof of \ref{A}, and establishes a retraction procedure $Z_1 \leadsto Z_2 \leadsto \cdots \leadsto Z_{J+1},$ defined up to some maximal index $J \geq 1$ for which the inclusions $$Z_1 \hh Z_2 \hh \cdots Z_{J+1}$$ are simultaneously homotopy isomorphisms. These retractions require our cost $c$ satisfy Assumptions (A0)--(A5), a specific form of (A6), the $c$-subdifferentials have finite cardinality and the Uniform Halfspace condition \ref{whs}. The retract is obtained by integrating a vector field, denoted $\eta_{avg}(x)$, whose flow under the above assumptions yields continuous deformation retracts (Theorem \ref{retract2}). 

An important hypothesis for constructing our retractions is the (UHS) condition. Some notation is necessary. Abbreviate $Y'(x):=dom(c_x)$. Let $Z=Z(c, \sigma, \tau)$ be the corresponding Kantorovich functor. For $x\in Z'$, let $pr_{Z'}: T_{x} X \to T_{x} Z'$ denote the orthogonal-projection mapping, where $T_{x} Z'$ is the space of directions of $Z'$ at $x$.


\begin{dfn}
\label{whs}

Let $\phi=\psi^c$ be a $c$-convex potential. For $x_0\in dom(\psi^c)$ abbreviate $Z':=Z(\del^c \psi^c(x_0))$. Select any $y_0\in \del^c \psi^c(x_0)$. Recall the definition of $\nu_x(y)$ from \ref{order}. For parameter $\beta \geq 2$, $x\in Z'$, $y\in Y$, define tangent vectors $\eta(x,y) \in T_x X$ by the equation \begin{equation}\eta(x, y):=|\psi(y_0)-\psi(y)+\cd(x; y, y_0)|^{-\beta} \cdot pr_{Z'}( \nabla_x \cd(x; y, y_0)). \label{lower0}
\end{equation} 

We say Uniform Halfspace (UHS) conditions are satisfied at $x$ in $Z'$ with respect to the parameter $\beta$ if: 

\textbf{(UHS1)} the Bochner integral $\eta_{avg}(x)$ defined by \begin{equation}\eta_{avg}(x):=(\mathscr{H}_Y[Y])^{-1} \int_{Y} \eta(x,y) . \nu^{~\beta}_x(y) . d\mathscr{H}_Y(y) \label{lowera}
\end{equation} is nonzero finite vector in $T_{x}Z'-\{\textbf{0}\}$; and 

\textbf{(UHS2)} there exists a constant $C>0$, uniform with $x\in Z'$, for which the estimate \begin{equation} 
||\eta_{avg}(x)|| \geq C \int_{Y} ||\eta(x,y) . \nu^{~\beta}_x(y)|| d\mathscr{H}_Y(y) >0 \label{lowerb}
\end{equation} holds.
\end{dfn}

We make some remarks. First, we observe (UHS2) basically implies (UHS1). Second, the definition of $\eta(x,y)$ is independent of the choice of $y_0 \in \sub$. Third, in practice the parameter $\beta>2$ is taken sufficiently large to ensure the divergence of $\eta_{avg}(x)$ whenever $x$ converges into $\del Z'$. If $c$ satisfies Assumption (A5), then $\psi(y)$ is locally Lipschitz (Prop.\ref{lipsi}) and the exponent $\beta=\dim(Y)+2$ is sufficient. 





Now we state our theorem.
\begin{thm}[Local Cellular Retraction 
$Z'\cap Z_j \leadsto Z' \cap Z_{j+1}$]


Suppose $\del^c \psi^c(x)$ is finite for every $x\in dom(\psi^c)$. Abbreviate $Z':=Z'(x)=Z(\del^c \psi^c(x))$. If (UHS) Conditions are satisfied for all points $x' \in Z' \cap Z_j$ (\ref{whs}), and $Z' \cap Z_{j+1} \neq \emptyset$ is nonempty, then there exists a continuous map $$\Psi: (Z' \cap Z_j) \times [0,1] \to Z' \cap Z_j $$ 
such that:

(i) $\Psi(x,s)=x$ for all $x \in Z' \cap Z_{j+1}$; and

(ii) $\Psi(x,0)=x$ for all $x\in Z'$; and 

(iii) $\Psi(x'',1) \in Z' \cap Z_{j+1}$ for all $x'' \in Z' \cap Z_j$.

In addition, if the above hypotheses are satisfied, then $Z'\cap Z_{j+1}$ is a strong deformation retract of $Z' \cap Z_j$ and the inclusion $$Z'\cap Z_{j+1} \hookrightarrow Z'\cap Z_j$$ is a homotopy isomorphism.
\label{retract1} 
\end{thm}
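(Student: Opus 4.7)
The plan is to reproduce the flow construction of Theorem \ref{A} one codimension deeper, working inside the cell $Z' = Z(\del^c \psi^c(x))$ and retracting toward the higher-codimension stratum $Z' \cap Z_{j+1}$. The role played by $X - A$ in Theorem \ref{A} is now assumed by $Z' \cap (Z_j - Z_{j+1})$, and the role of the boundary $\del A$ is played by $Z' \cap Z_{j+1}$. The vector field to integrate is precisely the field $\eta_{avg}$ from Definition \ref{whs}, whose summands $\eta(x,y)$ already lie in $T_x Z'$ thanks to the projector $pr_{Z'}$ appearing in \eqref{lower0}.

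First I would verify that $\eta_{avg}$ is a well-defined, locally Lipschitz, non-vanishing tangent field on $Z' \cap (Z_j - Z_{j+1})$. The hypothesis that $\del^c \psi^c(x)$ is finite means the weight $\nu_x(y)^\beta$ from Definition \ref{order} regularizes the polar integrand in \eqref{lower0} at each $y_0 \in \del^c \psi^c(x)$, so the Bochner integral \eqref{lowera} converges to a finite vector. Non-vanishing and the uniform lower bound $||\eta_{avg}(x)||\geq C>0$ are supplied by (UHS1)--(UHS2). Tangency to $Z'$ follows from Lemma \ref{orthog}, which identifies $T_x Z'$ as the orthogonal complement of the cross-gradients $\nabla_x \cd(x; y_0, y_1)$ for $y_0, y_1 \in \del^c \psi^c(x)$; composing each $\nabla_x \cd(x; y, y_0)$ with $pr_{Z'}$ then places $\eta(x, y)$ and hence $\eta_{avg}(x)$ inside $T_x Z'$.

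Next I would port Lemmas \ref{conc1} and \ref{ftbu} to this setting to obtain finite-time blow-up of the flow $x'(s) = \eta_{avg}(x(s))$ toward $Z' \cap Z_{j+1}$. The divergence of $\eta_{avg}$ as $x$ approaches $x_\infty \in Z' \cap Z_{j+1}$ arises because new points enter $\del^c \psi^c(x_\infty)$: the factors $|\psi(y_0) - \psi(y) + \cd(x; y, y_0)|^{-\beta}$ for those new $y$'s blow up faster than $\nu_x(y)^\beta$ can cancel, and the lower bound (UHS2) transfers that divergence to $||\eta_{avg}||$. The asymptotic convexity of the underlying potential $f_{avg}$ along $\eta_{avg}$ is exactly the rank-one argument of Lemma \ref{conc1}, which feeds into the finite-time blow-up argument of Lemma \ref{ftbu}. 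Continuity (in fact local Lipschitz dependence) of the maximal interval $\omega(x_0)$ in the initial point then follows from Lemma \ref{maxinterval}, with $\omega$ extended by $\omega \equiv 0$ on $Z' \cap Z_{j+1}$. Note that the flow automatically remains in $Z' \cap Z_j$: it stays in $Z'$ by tangency, and $Z_j$ is closed-under-descent, so as the span of cross-gradients can only grow along the flow, we stay in $Z_j$ and first exit precisely at $Z_{j+1}$.

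Finally I reparameterize as in \eqref{reparam} by setting $\Psi(x_0, s) := \Phi(x_0, s\,\omega(x_0))$, where $\Phi$ is the flow of $\eta_{avg}$. The three properties are then immediate: points of $Z' \cap Z_{j+1}$ have $\omega = 0$ and are fixed, giving (i); $\Psi(x, 0) = x$ by construction, giving (ii); and $\Psi(x, 1) \in Z' \cap Z_{j+1}$ by finite-time blow-up, giving (iii). The strong deformation retract statement then yields the homotopy isomorphism $Z' \cap Z_{j+1} \hh Z' \cap Z_j$ at once. The main technical obstacle is continuity of $\Psi$ across the target stratum, i.e.\ matching the flow $\Phi(x_0, s)$ as $s \to \omega(x_0)^-$ with the identity on $Z' \cap Z_{j+1}$ as $x_0$ itself approaches $Z_{j+1}$. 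This rests entirely on the continuity of $\omega$ from Lemma \ref{maxinterval}, which in turn depends on the uniform lower bound (UHS2) and the asymptotic convexity of $f_{avg}$; without these the flow could slow down near the target stratum and the reparameterized map would fail to be continuous there.
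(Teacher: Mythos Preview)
Your proposal is correct and follows essentially the same route as the paper: both integrate the tangent field $\eta_{avg}$ from Definition~\ref{whs} on $Z'\cap(Z_j-Z_{j+1})$, invoke (UHS) for non-vanishing and the asymptotic-convexity/finite-time-blow-up argument (the paper packages the ported versions of Lemmas~\ref{conc1}, \ref{ftbu}, \ref{maxinterval} into Lemma~\ref{L;ft}), and then reparameterize by $\omega(x_0)$ exactly as in \eqref{reparam}. Your write-up is in fact slightly more explicit than the paper's on why $\eta_{avg}$ is tangent to $Z'$ (via Lemma~\ref{orthog}) and on the mechanism by which the integrand blows up at $Z'\cap Z_{j+1}$.
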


The vector field $\eta_{avg}(x)$ defined in Theorem \ref{retract1} is tangent to the cell $Z'(x)$, and therefore the flow is constrained to the cells $Z'(x)$. For $x$ varying over $Z_j$, the vector field $\eta_{avg}(x)$ varies continuously with respect to $x$ according to Lemma \ref{psilem1}. The hypothesis $Z' \cap Z_{j+1} \neq \emptyset$ cannot be ignored, and is definitely necessary for continuity of the gradient flow of $\eta_{avg}$. 



Next we claim the mappings $$\{\Psi: Z'(x)\cap Z_j \to Z'(x) \cap Z_j\}$$ assemble to a continuous retraction $Z_j \times [0,1] \to Z_j$, for $x\in Z_j$, which establishes that $Z_j \hookleftarrow Z_{j+1}$ is a homotopy isomorphism.

\begin{thm}[Global Retraction]
Let $c, \sigma, \tau, \rho, Z, Z', \psi, \phi$ be defined as in Theorem \ref{retract1}. If $j\geq 1$ is an integer such that (UHS) Conditions are satisfied for all points $x \in Z_j$, and $Z'(x) \cap Z_{j+1} \neq \emptyset$ for every $x\in Z_j-Z_{j+1}$, then the local homotopy equivalences $$\{\Psi: (Z'\cap Z_j) \times [0,1] \to Z'\cap Z_{j+1}\}$$ constructed in Theorem \ref{retract1} assemble to a continuous global deformation retract $Z_j \leadsto Z_{j+1}.$ 

In addition, if $J\geq 1$ is that maximal integer where (UHS) conditions are satisfied throughout $Z_J$, then composing the retractions $\{\Psi\}$ produces a codimension-$J$ homotopy isomorphism $Z_1 \simeq Z_{J+1}.$ 
\label{retract2}
\end{thm}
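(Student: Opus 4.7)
\textbf{Proof Plan for Theorem \ref{retract2}.}  The plan is to assemble the cell-wise vector fields $\eta_{avg}$ of Theorem \ref{retract1} into a single globally-defined, continuous vector field on the stratum $Z_j - Z_{j+1}$, integrate it, and reparametrize via Lemma \ref{maxinterval} to obtain the retract $Z_j \leadsto Z_{j+1}$.  The codimension-$J$ conclusion then follows by induction on $j=1,2,\ldots,J$, composing strong deformation retracts.

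First I would promote $\eta_{avg}$ from a locally-defined section (on each cell $Z'(x_0)\cap Z_j$) to a globally-defined section of the tangent bundle $T Z_j$ restricted to $Z_j-Z_{j+1}$.  For every $x\in Z_j-Z_{j+1}$ the formula \eqref{lowera}, instantiated with $Z':=Z'(x)=Z(\del^c\psi^c(x))$ and $y_0\in\del^c\psi^c(x)$, produces a vector $\eta_{avg}(x)\in T_x Z'(x)\subset T_x Z_j$.  The crucial point is that this section is tangent to the cell through each point, so that integrating it constrains the flow to $Z'(x_0)$ and reduces the dynamics locally to the situation of Theorem \ref{retract1}.

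The continuity of the assembled field $x\mapsto \eta_{avg}(x)$ on $Z_j-Z_{j+1}$ is the core technical step.  One shows, using Proposition \ref{dim-est} together with the constant-rank/DC-implicit-function structure on the stratum, that the cell $Z'(x)$ has locally constant Hausdorff dimension $n-(j-1)$ on $Z_j-Z_{j+1}$ and that its space of directions $T_x Z'(x)$ (hence the orthogonal projection $pr_{Z'(x)}:T_xX\to T_x Z'(x)$) depends continuously on $x$.  Combined with upper semicontinuity of $x\mapsto \del^c\psi^c(x)$ from Lemma \ref{psilem1}, (A1), and (A5), the integrand $\eta(x,y)\cdot \nu_x^{\beta}(y)$ in \eqref{lowera} depends continuously on $x$ uniformly in $y\in Y$, so the Bochner integral $\eta_{avg}(x)$ is continuous.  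The (UHS) hypothesis and the non-vanishing $\|\eta_{avg}(x)\|\geq C>0$ then give a well-defined continuous nowhere-vanishing vector field on $Z_j-Z_{j+1}$ with divergence toward $\del Z'(x)$ exactly as in Lemma \ref{ftbu}.

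Next, I would integrate this field.  For $x_0\in Z_j-Z_{j+1}$ the initial value problem $\Psi'=\eta_{avg}(\Psi)$, $\Psi(x_0,0)=x_0$ has a unique solution on a maximal interval $[0,\omega(x_0))$; the orbit lies in $Z'(x_0)\cap Z_j$ (tangency), and the hypothesis $Z'(x_0)\cap Z_{j+1}\neq\emptyset$ together with the asymptotic convexity of Lemma \ref{conc1} and the finite-time blow-up of Lemma \ref{ftbu} imply $\lim_{s\to\omega(x_0)^-}\Psi(x_0,s)\in Z'(x_0)\cap Z_{j+1}\subset Z_{j+1}$.  Lemma \ref{maxinterval} gives continuity of $x_0\mapsto\omega(x_0)$ on $Z_j-Z_{j+1}$, and the reparametrization $\Psi'(x_0,s):=\Psi(x_0,s\,\omega(x_0))$ used in \eqref{reparam} extends continuously to all of $Z_j$ by setting $\Psi'(x,s):=x$ for $x\in Z_{j+1}$.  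Checking the three defining properties of a strong deformation retract is then immediate, yielding $Z_j\leadsto Z_{j+1}$.

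Finally, for the codimension-$J$ statement, I would iterate: if (UHS) holds throughout $Z_J$, then the retracts $Z_j\leadsto Z_{j+1}$ for $j=1,\dots,J$ compose, since the image of each lands inside the domain of the next and a finite composition of strong deformation retracts is again a strong deformation retract.  This yields the homotopy equivalence $Z_1\simeq Z_{J+1}$ as required.

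\emph{Main obstacle.}  The hard step is Step 2: verifying that $x\mapsto \eta_{avg}(x)$, and in particular the projection $pr_{Z'(x)}$, is continuous as one moves across points whose $c$-subdifferentials $\del^c\psi^c(x)$ change configuration.  Although the subdifferential is upper-hemicontinuous, it need not vary continuously, and the cell $Z'(x)$ can a priori jump in shape.  Controlling this requires that the span of the cross-gradients $\{\nabla_x \cd(x;y_0,y_1)\}$ have locally constant dimension on the stratum $Z_j-Z_{j+1}$, which is precisely the content of the constant-rank/DC-implicit-function theorem behind Proposition \ref{dim-est}.  Everything else reduces to applying Lemmas \ref{avgpot}, \ref{ftbu}, and \ref{maxinterval} on each cell.
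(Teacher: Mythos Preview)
Your proposal is correct and follows the same overall strategy as the paper: integrate the cell-wise field $\eta_{avg}$ to get local retracts $Z'(x)\cap Z_j\leadsto Z'(x)\cap Z_{j+1}$ via Lemmas \ref{ftbu} and \ref{maxinterval} (the paper packages these as Lemma \ref{L;ft}), then glue, then compose over $j=1,\ldots,J$.

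The one substantive difference is in how the gluing is justified. You treat the assembly as a question of global continuity of $x\mapsto\eta_{avg}(x)$ on $Z_j-Z_{j+1}$, and you identify the continuity of $pr_{Z'(x)}$ across varying subdifferentials as the main obstacle. The paper sidesteps this analysis entirely with a single geometric observation: for $x$ varying over $Z_j$, the cells $Z'(x)$ are either \emph{coincident} or intersect only along a subset of $Z_{j+1}$. Indeed, if $x\in Z'(x_0)$ then $\del^c\psi^c(x_0)\subset\del^c\psi^c(x)$, so either $Z'(x)=Z'(x_0)$ or the strictly larger subdifferential forces $x\in Z_{j+1}$. Hence on $Z_j-Z_{j+1}$ the cell $Z'(x)$ is locally constant, the projection $pr_{Z'(x)}$ does not actually vary, and the local retracts $\Psi$ agree on overlaps because those overlaps sit inside $Z_{j+1}$, which every $\Psi$ fixes. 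Your continuity argument is not wrong, but the paper's route makes the ``main obstacle'' you flagged essentially disappear.
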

The proof of \ref{retract2} is analogous to our proof of Theorem \ref{retract1}, which is the ``base case" retraction of $X=Z_0$ onto the activated domain $A=Z_1$. The formal proof of \ref{retract1} required Lemmas \ref{ftbu}, \ref{maxinterval}. Likewise the formal proofs of \ref{retract2} require analogous lemmas, summarized in:

\begin{lem}\label{L;ft}
In the notations of \ref{retract1}--\ref{retract2} be satisfied. Let $j\geq 1$ be such that (UHS) conditions are satisfied for all $x\in Z_j$, and $Z'(x)\cap Z_{j+1}\neq \emptyset$ for every $x\in Z_j - Z_{j+1}$, and $\del^c\psi^c$ is has pointwise finite cardinality. 

For $x_0 \in Z_j - Z_{j+1}$, consider the initial value problem \begin{equation}\label{avgflow}
x'=\eta_{avg}(x), ~~x(0)=x_0.
\end{equation}

(a) For every initial value $x_0\in Z_j - Z_{j+1}$, the flow $\Psi$ defined by \eqref{avgflow} diverges to infinity in finite time.

(b) The maximum interval of existence $I(x_0):=[0, \omega(x_0))$ of solutions to \eqref{avgflow} varies continuously with respect to the initial value $x_0\in Z_j-Z_{j+1}$.
\end{lem}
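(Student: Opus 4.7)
The plan is to imitate the proof strategy used for Lemmas \ref{ftbu} and \ref{maxinterval}, but carried out relative to the cellular structure of the filtration $Z_j \supset Z_{j+1}$ rather than $X\supset A$. The underlying observation is that on a fixed cell $Z'=Z'(x_0)$, the vector field $\eta_{avg}(x)$ is the $pr_{Z'}$-projection of a gradient of an averaged potential $f_{avg}^{Z'}$ defined analogously to \eqref{favg} but restricted to $Z'$, with poles exactly along $\partial Z'\cap Z_j = Z'\cap Z_{j+1}$. Since $\eta_{avg}$ is tangent to $Z'$, the integral curves stay in $Z'$, so the problem reduces to a one-cell argument on each $Z'(x_0)\cap (Z_j-Z_{j+1})$.

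For part (a) I would first invoke the hypothesis (UHS2) at every $x\in Z_j$ to obtain a uniform lower bound $\|\eta_{avg}(x)\|\geq C\int_Y \|\eta(x,y)\nu_x^\beta(y)\|\,d\mathscr{H}_Y(y)>0$ throughout $Z'\cap(Z_j-Z_{j+1})$. Next I would establish the cellular analogue of Lemma \ref{conc1}: if $x_k\in Z'\cap (Z_j-Z_{j+1})$ converges to some $x_\infty\in Z'\cap Z_{j+1}$, then at least one of the cross-difference denominators $|\psi(y_0)-\psi(y)+c_\Delta(x_k;y,y_0)|$ tends to $0$ for $y$ ranging in a nontrivial subset $V\subset Y$ (corresponding to the new active target points appearing in $\del^c\psi^c(x_\infty)$). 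The choice $\beta=\dim(Y)+2$ from Proposition \ref{lipsi}, together with (A1) and Lemma \ref{conc}, forces the directional Hessian $\nabla^2_{xx} f_{avg}^{Z'}[\eta_{avg}]$ to diverge to $+\infty$ along $-\eta_{avg}$, exactly as in Lemma \ref{conc1}. The combination of uniform speed bound and asymptotic strong convexity then gives finite-time blow-up by the same discretized-Euler-step reasoning as in Lemma \ref{ftbu}: every step has definite magnitude, and the asymptotic convexity means the continuous flow is well-approximated by these steps, so the trajectory must reach the pole set $Z'\cap Z_{j+1}$ in finite time.

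For part (b) I would first recall that $\omega$ is always lower semicontinuous for a $C^1$ vector field \cite[Thm 2.1, pp.94]{Hart}. Finiteness of $\omega(x_0)$ follows from (a). To obtain continuity (indeed local Lipschitz behavior) at $x_0\in Z_j-Z_{j+1}$, I would use the asymptotic strong convexity of $f_{avg}^{Z'}$ along $\eta_{avg}$ established in the preceding paragraph: this implies the forward flow is asymptotically contracting in the direction of motion, so nearby trajectories reach the pole set $Z'\cap Z_{j+1}$ within comparable times. Combined with the uniform lower bound $\|\eta_{avg}\|\geq C>0$ and the continuity of $\eta_{avg}$ on $Z_j-Z_{j+1}$ (which follows from Lemma \ref{psilem1} together with the local equations \eqref{dceq} describing $Z'$, and from Lemma \ref{avgpot} for the averaged potential), a standard Gr\"onwall-type comparison gives $|\omega(x_1)-\omega(x_0)|\leq C'\|x_1-x_0\|$ on a neighborhood of $x_0$ in $Z_j-Z_{j+1}$.

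The main obstacle I anticipate is verifying that the cellular flow is genuinely well-defined and continuous across the stratum $Z_j-Z_{j+1}$. The projection $pr_{Z'(x)}$ depends on $x$ through the subdifferential $\del^c\psi^c(x)$, which is only guaranteed to be upper semicontinuous in $x$; the assumption of finite cardinality of $\del^c\psi^c$ and Proposition \ref{dim-est} (constant-rank of the gradient system on $Z_j-Z_{j+1}$) should be used to show $T_x Z'(x)$ varies continuously on this stratum, so that $pr_{Z'(x)}$ and consequently $\eta_{avg}(x)$ are continuous. Once this continuity is in hand, the tangency of $\eta_{avg}$ to $Z'(x_0)$ keeps the trajectory inside a single cell until it reaches its relative boundary, at which point it meets $Z_{j+1}$ by hypothesis $Z'(x_0)\cap Z_{j+1}\neq\emptyset$, and the previous two paragraphs apply.
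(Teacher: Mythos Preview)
Your proposal follows essentially the same route as the paper: reduce to the base-case Lemmas \ref{ftbu} and \ref{maxinterval}, use (UHS2) for the uniform lower bound on $\|\eta_{avg}\|$, and invoke the asymptotic-convexity argument of Lemmas \ref{conc}--\ref{conc1} to obtain both finite-time blow-up and the contracting property that makes $\omega$ Lipschitz. Your discussion of the continuity of $pr_{Z'(x)}$ across the stratum is more explicit than the paper's, which simply asserts continuity.

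There is, however, one step the paper makes explicit that you assert without argument: the identification of the pole set of $\eta_{avg}$ on $Z'$ with $Z'\cap Z_{j+1}$. You write ``with poles exactly along $\partial Z'\cap Z_j = Z'\cap Z_{j+1}$'' and later that the trajectory ``must reach the pole set $Z'\cap Z_{j+1}$''. But a denominator $|\psi(y_0)-\psi(y)+c_\Delta(x;y,y_0)|$ vanishing at the $\omega$-limit $\bar{x}$ only tells you that some new $\bar{y}$ has entered $\del^c\phi(\bar{x})$; it does not by itself say that the dimension of $\mathrm{span}\{\nabla_x c_\Delta(\bar{x};y,y')\}$ has jumped, which is what membership in $Z_{j+1}$ requires (Definition \ref{dimdef}). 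The paper closes this gap by noting that (UHS) forces the \emph{projection} $pr_{Z'}(\nabla_x c_\Delta(\bar{x};y_0,\bar{y}))$ to be nonzero in $T_{\bar{x}}Z'$, while the gradients $\nabla_x c_\Delta(\bar{x};y_1,y_0)$ for $y_0,y_1\in\del^c\phi(x_0)$ lie in $(T_{\bar{x}}Z')^\perp$ by Lemma \ref{orthog}; hence the new gradient is linearly independent of the old ones and $\bar{x}\in Z_{j+1}$. You should insert this linear-independence observation into your part (a).
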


\begin{proof}
We follow the arguments of \ref{ftbu}. First (UHS) implies the flow \eqref{avgflow} is well-defined and extendable throughout the interior of the cells $Z'(x_0)\subset Z_j$. At the $\omega$-limit point $$\bar{x}:=\lim_{t\to \omega(x_0)^-} x(t)$$ we claim $\bar{x}\in Z'(x_0)\cap Z_{j+1}$. The key point is to verify $\bar{x}\in Z_{j+1}$. At the $\omega$-limit point, we find $$\del^c \phi(\bar{x}) \supset \{\bar{y}\} \coprod \del^c \phi(x_0),$$ and the subdifferential at $\bar{x}$ contains at least one new target point $\bar{y} \in Y$. The (UHS) condition implies the average of the projections $\nabla_x \cd(\bar{x}, y_0, \bar{y})$ (with respect to $\bar{\nu}$) is a nonzero vector of $T_{\bar{x}} Z'$, and linearly independent from the nonzero tangent vectors $\nabla_x \cd(\bar{x}, y_1,y_0)$, $y_0, y_1 \in \del^c \phi(x_0)$ (which are orthogonal to $T_{\bar{x}} Z'$). This proves $\bar{x}\in Z_{j+1}$. This proves (a). 

Next we claim the flow defined by $\Psi$ is asymptotically contracting, and therefore $\omega$ is a Lipschitz continuous function. The same arguments from \ref{conc}, \ref{conc1} shows the vector field $\eta_{avg}$ is the gradient of an averaged potential $f_{avg}$, and this potential is asymptotically convex in the direction of $\nabla_x f_{avg}$. This proves (b).

\end{proof}

\begin{proof}[Sketch of proof for Theorem \ref{retract1}] 

We follow the proof of Theorem \ref{retract1}, and construct a continuous vector field $\eta_{avg}(x')$ on $Z' \cap Z_j$ which blows-up precisely on $Z'\cap Z_{j+1} \subset Z' \cap Z_j$, which we assume is nonempty. For initial points $x'$, the maximal interval of existence $I(x')=[0, \omega(x'))$ varies continuously with respect to $x'$ (see Lemma \ref{L;ft}). The field $\eta_{avg}(x')$ will generate a global forward-time continuous mapping $$\Psi:(Z' \cap Z_j) \times I \to Z' \cap Z_j$$ satisfying the usual ordinary differential equation $\frac{d}{ds}[\Psi(x',s')]= \eta_{avg}(\Psi(x',s'))$ for all $s'\in I(x')$. The flow $\Psi$ is reparameterized according to the parameter $\omega(x_0)$ to obtain a continuous deformation retract $\Psi': (Z' \cap Z_j ) \times [0,1] \to Z' \cap Z_{j}$ as desired.
\end{proof}

\begin{proof}
[Proof of Theorem \ref{retract1}]
The (UHS) conditions ensure the cross-differences $\cd$ have nonvanishing gradient $\nabla_x \cd \neq 0$ throughout the domain $Z'$, and the gradients $\nabla_x \cd$ vary continuously over $Z_j$. Moreover the uniform Halfspace condition (UHS2) ensures $||\eta_{avg}(x)||$ is uniformly bounded away from zero in neighborhoods of the poles $\{||\eta_{avg}(x)||=+\infty\}=Z' \cap Z_{j+1}$ in $Z' \cap Z_j$. 


The vector field $\eta_{avg}(x')$ integrates to a global forward-time flow $\Psi: Z' \cap Z_j \times I \to Z' \cap Z_j$, where as usual we have $d/ds[\Psi(x',s')]=\eta_{avg}(\Psi(x',s'))$ for all $s'\in I(x')$. According to Lemma \ref{L;ft}, for every choice of $x=x(0)$ initial value on $Z'(x)\cap Z_j$, the trajectory $x=x(t)$ converges in finite time to $Z' \cap Z_{j+1}$ with respect to the flow \eqref{ode}. Our Assumptions (A0)--(A6) imply continuous dependence on the choice of initial value. The argument from Proposition \ref{conc1} with Lemma \ref{L;ft} prove the flow $\Psi$ can be reparameterized according to the parameter $\omega(x_0)$ to obtain a continuous deformation retract $\Psi'$ of $Z'\cap Z_j $ onto $Z' \cap Z_j$, as desired.

\end{proof}

Finally we observe the ``local" homotopy isomorphisms $\{\Psi: Z'(x) \cap Z_j \leadsto Z'(x) \cap Z_{j+1}\}$ assemble to a continuous deformation retract of $Z_j \simeq Z_{j+1}$ and establish Theorem \ref{retract2}:

\begin{proof}[Proof of Theorem \ref{retract2}]
The proof of Theorem \ref{retract1} constructs homotopy deformations $Z'(x) \cap Z_j \to Z'(x) \cap Z_{j+1}$, where $Z'(x)=Z(\del \psi^c (x)) \subset Z_1$ for a given $x$ in $Z_j$. When $x$ varies over $Z_j$, the corresponding $Z'=Z'(x)$ are either coincident or intersect along a subset of $Z_{j+1}$. So the local strong deformation retracts $\{\Psi\}$ constructed in Theorem \ref{retract2} assemble to a continuous deformation retract $Z_j \times [0,1] \to Z_j$. Therefore $Z_j \hookleftarrow Z_{j+1}$ is a homotopy isomorphism. Composing the retractions $Z_j \leadsto Z_{j+1}$ for $j=1,\ldots, J$ yields the deformation retract $Z_1 \leadsto Z_{J+1}$, as desired. 
\end{proof}

The above results establish Theorem \ref{B} from the Introduction.

\section{Brief Review of Singularity in the Literature}
The term ``singularity" is evidently overburdened having various interpretations within the literature. We use the term ``singularity" as arising in convex geometry: if $\phi$ is a lower semicontinuous convex function on $\bR^N$, then the ``singularity" of $\phi$ corresponds to the locus of non-differentiability on $dom(\phi)$, i.e. the set of points $x\in dom(\phi)$ where $\del\phi(x)\subset (\bR^N)^*$ is not single-valued. From the viewpoint of optimal transport, 
``singularity" then refers to the locus-of-discontinuity of $c$-optimal semicouplings.



The Kantorovich singularity (Def. \ref{ks}) is both categorical and topological, and provides an alternative view to the so-called ``regularity theory" of optimal transportation. Regularity in optimal transport usually refers to the Monge map $y=T(x)$ defined by the (Twist) condition, and $$T(x'):=\nabla_x c(x',-)^{-1} (-\nabla_x \psi^c(x')).$$ There is large volume of research concerning the $C^{1, \alpha}$ or $C^2$, $C^\infty$ regularity of $T$ under various hypotheses on $c, \sigma, \tau$. C.f. \cite[Ch.12]{Vil1}. This article however is strictly interested in the \emph{discontinuities} of $T$. So we pass silently over questions of the type ``How regular is the map $T$ away from the singularities?". This article and \cite{martel} rather studies the continuity properties of the locus-of-discontinuity of $T$. 

Several results concerning singularities of optimal transports have been attained in the literature. Alberti's paper \cite{Alberti} is basic starting point. Figalli \cite{Fig} studies the singularities of optimal transports between two probability measures supported on bounded open domains in the plane $\bR^2$ with respect to the quadratic Euclidean cost $c(x,y)=||x-y||^2/2$ (equivalently $c=-\langle x,y\rangle$). The main result of \cite[\S 3.2]{Fig} is that the singularity ($Z_2$ in our notation) has topological closure $\overline{Z_2}$ in $\bR^2$ with zero two-dimensional Hausdorff measure, $\mathscr{H}^2(\overline{Z_2})=0.$ Here we remark that the closure $\overline{Z_2}$ is not easily topologized from the singularity $Z_2$. In otherwords the closure is not entirely explicit. Figalli's work was extended in \cite{Fig-Kim}, where the evaluation $\mathscr{H}^n(\overline{Z_2})=0$ was established for singularities of optimal couplings under the hypothesis that probability measures are supported on bounded open domains of $\bR^n$, and again with respect to Euclidean quadratic cost. In \cite{Fig-dP} a similar result is established with respect to more general costs on $\bR^n$ satisfying basic nondegeneracy conditions, namely: 

- the cost $c\in C^2(X\times Y)$ with $||c||_{C^2} <+\infty$, 

- the rule $x\mapsto \nabla_y c(x,y)$ is injective map for every $y$, 

- the rule $y\mapsto \nabla_x c(x,y)$ is injective map for every $x$, and 

- $det(D_{xy}^2 c) \neq 0$ for all $(x,y)$. 

The investigations of Figalli suggest the following: if the source and target measures $\sigma, \tau$ are absolutely continuous to $\mathscr{H}_X, \mathscr{H}_Y$, and the cost $c$ satisfies (A0)--(A4), then the singularity $Z_2$ has Hausdorff dimension satisfying $$\dim_{\mathscr{H}} (Z_2)\leq \dim(Y).$$ We motivate this dimension formula in \ref{sing-dim}. The recent work \cite{KitMc} confirms this estimate under some particular conditions, namely Euclidean quadratic cost, convex source, and convexity and affine-independence of the disjoint target components. 






\printbibliography[title={References}]
\end{document}